\newcommand{\oo}{\ensuremath{\mathcal{O}}}
\newcommand{\kx}{\ensuremath{K[x_1, \dots, x_n]}}
\newcommand{\NN}{\ensuremath{\mathbb{N}^n}}
\newcommand{\ZZ}{\ensuremath{\mathbb{Z}^n}}
\newcommand{\esp}{\ensuremath{\mathcal{E}}}
\newcommand{\abs}{\ensuremath{\mathrm{abs}}}
\newcommand{\V}{\ensuremath{\mathcal{V}}}
\begin{document}
\begin{frontmatter}
\title{Border bases for lattice ideals}

\author{Giandomenico Boffi\corauthref{cor}\thanksref{unintGrant}}
\address{UNINT, Universit\`a degli Studi 
   Internazionali di Roma, via Cristoforo Colombo 200, 00147 Roma.}
\corauth[cor]{Corresponding author.}
\ead{giandomenico.boffi@unint.eu}

\author{Alessandro Logar\corauthref{cor}\thanksref{FRA}}
\address{Dipartimento di Matematica e Geoscienze,
  Universit\`a degli Studi, via Valerio 12/1, 34127 Trieste.}
\ead{logar@units.it}

\thanks[unintGrant]{Partially supported by the UNINT grant 
  ``Metodi relativi allo studio degli ideali polinomiali.''}
\thanks[FRA]{Partially supported by the FRA 2013 grant 
  ``Geometria e topologia delle variet\`{a}'', Universit\`{a} 
  di Trieste, and by the PRIN 2010-2011 grant 
  ``Geometria delle variet\`a algebriche''.}

\begin{abstract}
The main ingredient to construct an $\oo$-border basis of an ideal 
$I\subseteq \kx$ is the order ideal $\oo$, which is a basis of the 
$K$-vector space $\kx/I$. In this paper we give a procedure to find 
all the possible order ideals associated with a lattice ideal $I_M$ (where 
$M$ is a lattice of $\ZZ$). The construction can be applied
to ideals of any dimension (not only zero-dimensional) and shows that 
the possible order ideals are always in a finite number. 
For lattice ideals of positive 
dimension we also show that, although a border basis is infinite, it can be 
defined in finite terms. Furthermore we give an example which proves that
not all border bases of a lattice ideal come from Gr\"obner bases. Finally, 
we give a complete and explicit description of all the border bases for
ideals $I_M$ in case $M$ is a 2-dimensional lattice contained in 
$\mathbb{Z}^2$.
\end{abstract}

\begin{keyword}
Border basis, Gr\"obner basis, lattice ideal, maximal clique, maximum 
clique, order ideal.
\end{keyword}
\end{frontmatter}

\section{Introduction}
Let $I$ be a zero-dimensional ideal in the polynomial ring $\kx$, then a 
\emph{border basis} of $I$ is composed by a finite set $\oo$ of monomials 
closed under division, which is a basis 
of the $K$-vector space $\kx/I$ and a set of polynomials 
$f_1, \dots, f_m\in I$, such that $f_i = b_i -\sum_{j} a_{ij} t_j$, where 
$t_j \in \oo$, $a_{ij}\in K$ and $b_i$ are elements in the \emph{border} 
of $\oo$ (i.e.\ are not in $\oo$, but are obtained multiplying an 
element of $\oo$ by a variable). Border bases are a natural generalization 
of Gr\"obner bases, and indeed, given a Gr\"obner basis $G$, it is easy to
construct the corresponding border basis (the set $\oo$ is the set of 
irreducible monomials w.r.t.\ $G$). Border bases were introduced 
in~\cite{mmm2} (see also~\cite{mmm1}); for a discussion of their properties, 
see, among others,~\cite{kk1, kk2, kr, mora, m1}. One should notice that 
in fact~\cite{m1} deals with a more general notion of border basis.

The set we have denoted by $\oo$ is often called an order ideal in books 
of commutative algebra, and we use this name throughout. But one can find in 
the literature at least ten other ways of naming $\oo$: see page 6 
of~\cite{kr}, where the alternative terminologies are linked to different
branches of Mathematics.

The main difference between border
bases and Gr\"obner bases lies in the order ideal 
$\oo$ associated with them, which, for border bases, has less constraints, 
since it is not linked to a term order. One specific application of 
border bases regards the determination
of the solutions of a system of polynomial equations in which the coefficients 
are real numbers, known with some approximation: it turns out that border 
bases are more stable under small perturbations of the coefficients than 
Gr\"obner bases and allow therefore to construct better
values for the zeros~(\cite{aft, kr, mt1}). In this paper, however, 
we do not focus our attention on the problem of determining zeros of 
systems of polynomials, but we consider a different question:
we want to find all the border bases
of a given lattice ideal.

A \emph{lattice ideal} is an ideal in the 
polynomial ring which comes from a lattice $M$ in $\ZZ$. More precisely, 
the ideal is generated by the binomials $x^{a^+}-x^{a^-}$, where  
$a = (a_1, \dots, a_n) \in M$ and $a = a^+-a^-$ where $a^+$ is the $n$-tuple 
whose $i$-th element is $a_i$, if $a_i$ is positive and $0$ otherwise
(a similar definition for $a^-$). Lattice ideals arise in many different 
examples: all toric ideals, for instance, are lattice ideals as well as
the ideal associated with an integer programming problem. The construction 
of Gr\"obner bases for lattice ideals was studied by many authors
(see~\cite{blr, bl, bl2, swz} and the references 
given there) and there are efficient symbolic computation packages 
which allow their computation~\cite{4ti2, CoCoA-5, Normaliz}.

In this paper, as stated, we consider an ideal $I_M$ defined by a 
lattice $M\subseteq \ZZ$ ($M$ can therefore equivalently be seen as 
a sub-module of $\ZZ$) and we show how to construct \emph{all} the 
possible border bases of $I_M$. We omit a very strong 
condition that is usually considered for border bases, that is, 
we do not assume that the ideal $I_M$ is zero-dimensional. (For 
another paper in which the positive dimension case for border bases
is considered, see~\cite{mt2}). Let us
remark that the main step in getting a border basis is to find 
an order ideal which is a $K$-basis for $\kx/I_M$ and this problem 
can be converted into the problem of determining an order ideal $\oo$
of $\NN$ (w.r.t.\ the partial order $\preceq$, where $u \preceq v$ if every
component of $u$ is less than or equal to the corresponding component 
of $v$) whose elements uniquely represent $\ZZ/M$. Section~\ref{sez2}
therefore deals with order ideals in $\kx$ and in $\NN$; an order ideal 
of $\NN$ satisfying the above properties will be called a max-compatible
order ideal (w.r.t.\ $M$); later, in section~\ref{coi},
we consider the problem of finding, for a given module $M
\subseteq \ZZ$, all the possible max-compatible order ideals w.r.t.\ $M$.
The construction we propose determines a
finite graph whose maximal cliques allow to recover the required 
order ideals. In particular, in this way we see that there are only 
finitely many order ideals and therefore the ideal $I_M$ has only finitely
many border bases. 

Section~\ref{bb} deals with the case of border bases of $I_M$ 
and in particular we briefly consider the case of infinite border 
bases: the specific shape of any order ideal $\oo$, together with 
the properties of the lattice $M$ allow us to describe both
the border of $\oo$ and the $\oo$-border basis in finite terms. 
Moreover we give 
some examples and in particular we show that there exist border bases 
for lattice ideals which cannot be obtained from any Gr\"obner basis of
that ideal.

The final section considers the very peculiar case in which $M$ is a 
module of rank~2 contained in $\mathbb{Z}^2$. We show that in this case
every border basis comes from a Gr\"obner basis and we see that the results
obtained in the previous sections allow a complete and
explicit description of all the border (Gr\"obner) bases of~$I_M$.

\section{Order ideals}
\label{sez2}
Recall that the commutative monoid $\mathbb{T}$ of the terms of $\kx$
(w.r.t.\ the product) is isomorphic to the additive monoid $\NN$.
If $t \in \mathbb{T}$ the corresponding element of $\mathbb{N}^n$ is denoted
by $\lg(t)$; if $u \in \NN$, the corresponding element of 
$\mathbb{T}$ is denoted by $\esp(u)$ (however, if there is no 
risk of ambiguity, sometimes we will omit the function $\esp$). 
By $e_1, \dots, e_n$ we denote the 
canonical basis of $\ZZ$. On $\NN$ we consider a partial order $\preceq$
given by $u, v \in \NN$, $u \preceq v$ if every component of $u$ is
not bigger than the corresponding component of $v$. If $u \in \NN$, let
\[
D(u) = \{ v \in \NN \mid v \preceq u \}, 
\qquad
C(u) = \{ v \in \NN \mid u \preceq v \}. 
\]
$D(u)\subseteq \NN$ corresponds to the monomials which divide
$\esp(u)$, while $C(u)$ corresponds to the monomials which are 
divided by $\esp(u)$.

If $u, v \in \NN$, $\mathrm{lcm}(u, v)$ is the element
$(\max(u_1, v_1), \dots, \max(u_n, v_n))$ where $u_i$ and $v_i$ are the 
components of $u$ and $v$ respectively. 

An \emph{order ideal} $\oo$ (in $\NN$) is a subset of $\NN$ 
such that, if $u\in \oo$, then $D(u) \subseteq \oo$. 
The \emph{border}
of $\oo$ is the set of elements $u \in \NN$ such that $u \not\in \oo$, 
but there exists $i \in \{1, \dots, n\}$ such that 
$u-e_i \in \oo$. The border of $\oo$ is denoted by $\partial \oo$. Using the
function $\esp$, we can define order ideals in $\kx$: an \emph{order ideal}
is a subset of $\mathbb{T}$ which contains all the divisors of its elements.
Analogously, the \emph{border} of an order ideal of $\kx$ is the set of 
terms which are not in the order ideal, but are obtained multiplying an 
element of the order ideal by one of the variables. We do not
require the finiteness condition of the order ideals. However, if the order
ideal is finite, we do get the usual definition given, for instance,
in~\cite{kr}. Also the definition of the border basis of an ideal $I$
of $\kx$ as given in~\cite{kr} can immediately be extended
to the case in which the order ideal is infinite (hence $I$ is not zero 
dimensional). Clearly, in this case, the border basis is infinite.

Suppose $M \subseteq \ZZ$ is any sub-module of $\ZZ$ of dimension 
$m\leq n$ and assume it is generated by the rows of the following matrix:
\begin{equation}
\left(
\begin{array}{ccccccc}
d_1 & * &  \dots & * & * &\dots&*\\
0   & d_2 & \dots & * & * &\dots&*\\
\dots &  &        &    &  &  &\\
0 & 0 & \dots     & d_m & * &\dots&*
\end{array}
\right)
\label{matr1}
\end{equation}
where $d_1, \dots, d_m$ are positive integers and every ``$*$'' above 
a $d_j$ represents a non-negative integer smaller that $d_j$,
i.e.\ the matrix is in Hermite Normal Form (HNF). 
We associate with $M$ the following subset of $\ZZ$:
\[
B = \{(i_1, \dots, i_n) \mid 0 \leq i_j < d_j \mbox{ for } 
j=1, \dots, m, \ i_j \in \mathbb{Z} \mbox{ for } j=m+1, \dots, n \}.
\]
Note that if $b_1, b_2 \in B$ and $b_1 \equiv_M b_2$ (where 
``$\equiv_M$'' means $b_1-b_2 \in M$), then $b_1 = b_2$ and every element 
of $\ZZ$ has a unique representative, $\mathrm{mod}\, M$, in $B$. Hence the 
elements of $B$ are in one to one correspondence with the elements of the
module $\ZZ/M$.

Given 
$b=(b_1, \dots, b_n)\in \ZZ$, the construction of 
its representative 
($\mathrm{mod}\, M$) $\rho(b)$ in $B$ 
is easily obtained by repeated divisions as follows: 
suppose $b_1, \dots, b_{r-1}$ ($r \leq m$) are 
such that $0 \leq b_i < d_i$ for $i \leq r-1$ and $b_{r}\geq d_{r}$, 
and replace $b$ by $b' = b-qM_{r}$ where $M_{r}$ is the 
$r^{th}$-row of 
the matrix~(\ref{matr1}) and $q$ is the quotient of $b_{r}$ when divided by 
$d_{r}$. Then $b_{r}'$ is such that $0\leq b_{r}' < d_{r}$.

The set $B$ is finite if and only if $m = n$. In this case $B$ is an 
order ideal of $\NN$ and has $d_1d_2\cdots d_n$ elements.
Sometimes it will be convenient to label 
its elements with $0, 1, \dots, d_1d_2\cdots d_n-1 $ in the following way: 
if $(a_1, \dots, a_n)\in B$, then its label is 
\begin{eqnarray}
a_n+d_na_{n-1}+d_nd_{n-1}a_{n-2}+ \cdots + d_n\cdots d_2 a_1.
\label{numB}
\end{eqnarray}
Consequently we can label all the elements of $\ZZ$, assigning the 
same number to equivalent elements. 

Summarizing the properties of the set $B$ when $m=n$, we have: it is an
order ideal; if $b_1, b_2 \in B$ are equivalent $\mathrm{mod}\, M$,
then $b_1=b_2$; $B$ is maximal w.r.t.\ this property and every element
of $\ZZ$ has an equivalent element in $B$. We capitalize on these
properties in the following definition concerning any order ideal of $\NN$
(finite or infinite):

\begin{defn}
Let $\oo$ be an order ideal of $\NN$. Then it is \emph{compatible} 
($\mathrm{mod}\, M$)
if it holds: $a, b \in \oo$ and $a, b$ equivalent $\mathrm{mod}\, M$,
then $a = b$. The order ideal is \emph{maximal compatible} 
($\mathrm{mod}\, M$) if it is compatible
and maximal in the set of compatible order ideals, w.r.t.\ inclusion.
It is \emph{max-compatible} ($\mathrm{mod}\, M$) if every element of 
$\ZZ$ has a representative ($\mathrm{mod}\, M$) in it.
\label{tipiOI}
\end{defn}
Clearly, max-compatible implies maximal compatible. If $\oo$ is finite, 
then max-compatible is equivalent to \emph{maximum compatible}, i.e.\ 
compatible with the maximum number of elements.

Consider now the lattice ideal $I_M$ associated with the module $M$ and 
suppose $m = n$. Then 
$\esp(B)$ is an order ideal of $\kx$ and every term $t\in \mathbb{T}$ 
is equivalent, modulo the ideal $I_M$, to an element 
$\mathcal{R}(t)\in \esp(B)$ defined by $\mathcal{R}(t) = \esp(\rho(\lg(t)))$.
In particular
$t-\mathcal{R}(t)$ is a binomial in $I_M$ and the set:
\[
  \{ 
    u-\mathcal{R}(u) \mid \ \mbox{for } u \in \partial(\esp(B))
  \}
\]
is a first example of a border basis of $I_M$. 

\begin{exmp} 
\label{ex1}
As a particular case, consider in $\mathbb{Z}^2$ the subgroup $M$ 
generated by the rows of the matrix 
\[
\left(
\begin{array}{cc}
2 & 6 \\
0 & 10
\end{array}
\right)
\]
which is in HNF. The set $B\subseteq \mathbb{Z}^2$ is 
$\{(i, j) \ |  \ 0 \leq i < 2, \ 0 \leq j < 10 \}$, hence $\esp(B)$ is 
the set of monomials $\{x^iy^j \ | \ 0 \leq i < 2, \ 0 \leq j < 10 \}$, 
the border (of $\esp(B)$) is 
$\{x^2y^j \ | \ j = 0,\dots, 9\} \cup \{y^{10}, xy^{10}  \}$ and the 
corresponding border basis is:
\[
\{x^2y^i-y^{4+i} \; | \; i = 0, \dots, 5  \} 
\cup \{x^2y^{6+j}-y^j \; | \; j = 0, \dots, 3 \}
\cup \{x^ky^{10}-x^k \; | \; k = 0, 1  \}.
\]
\end{exmp}

\section{Construction of order ideals}
\label{coi}
As usual, $M$ denotes a sub-module of $\ZZ$ of rank $m \leq n$. 
Let $\V$ be the union of all compatible order ideals of $\NN$
(i.e., as said, order ideals which do not contain equivalent elements
$\mathrm{mod}\, M$). If $a \in \ZZ$, 
then $\abs(a)$ denotes $a^+ + a^- \in \NN$. 
\begin{prop}
Let $\mathcal{A} =\{\abs(a)\ | \ a\in M\setminus \{0\}\}$ and 
$\mathcal{A}_1$ be the set of elements of $\mathcal{A}$ which are 
minimal w.r.t.\ the partial order $\preceq$. Then it holds: 
\[
\V = \NN \setminus \bigcup_{a \in \mathcal{A}} C(a) = \NN \setminus 
\bigcup_{a \in \mathcal{A}_1} C(a).
\]
\label{insiemeV}
\end{prop}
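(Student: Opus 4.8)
The plan is to prove the two displayed equalities separately, writing $W=\bigcup_{a\in\mathcal{A}}C(a)$ for brevity. The second equality $W=\bigcup_{a\in\mathcal{A}_1}C(a)$ is the easier one: since $\mathcal{A}_1\subseteq\mathcal{A}$ one inclusion is immediate, and for the other I would invoke the well-foundedness of $\preceq$ on $\NN$ (Dickson's lemma) to see that every $a\in\mathcal{A}$ dominates some minimal element $a'\in\mathcal{A}_1$; then $a'\preceq a$ forces $C(a)\subseteq C(a')$, so the larger union is absorbed into the smaller one. Thus it suffices to concentrate on the first equality $\V=\NN\setminus W$, which I would split into two inclusions.

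For $\V\subseteq\NN\setminus W$ I would argue by contraposition: suppose $u\in C(\abs(a))$ for some $a\in M\setminus\{0\}$, i.e.\ $\abs(a)=a^++a^-\preceq u$. Set $p=u-a^-$ and $q=u-a^+$. Since $a^-\preceq\abs(a)\preceq u$ and $a^+\preceq\abs(a)\preceq u$, both $p$ and $q$ lie in $\NN$, in fact in $D(u)$. Moreover $p-q=a^+-a^-=a\in M\setminus\{0\}$, so $p$ and $q$ are distinct but equivalent $\mathrm{mod}\,M$. Consequently any order ideal $\oo$ containing $u$ must contain $D(u)$, hence both $p$ and $q$, and therefore fails to be compatible; so $u\notin\V$.

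For the reverse inclusion $\NN\setminus W\subseteq\V$ the natural candidate witnessing $u\in\V$ is the principal order ideal $D(u)$ itself, which always contains $u$ and is clearly an order ideal. The point is to show it is compatible whenever $u\notin W$. So suppose $p,q\in D(u)$ are equivalent $\mathrm{mod}\,M$; then $a:=p-q\in M$, and I claim $\abs(a)\preceq u$. This rests on the elementary componentwise inequality $|p_i-q_i|\le\max(p_i,q_i)$ for non-negative integers, which gives $\abs(a)\preceq\mathrm{lcm}(p,q)$; since $p,q\preceq u$ we have $\mathrm{lcm}(p,q)\preceq u$, whence $\abs(a)\preceq u$, i.e.\ $u\in C(\abs(a))$. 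As $u\notin W$ this is only possible if $a=0$, that is $p=q$; thus $D(u)$ is compatible and $u\in\V$.

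The construction is pleasantly symmetric: in the first inclusion a relation $\abs(a)\preceq u$ produces two equivalent elements below $u$, while in the second two equivalent elements below $u$ force $\abs(a)\preceq u$. I expect the only genuinely delicate step to be the key estimate $\abs(p-q)\preceq\mathrm{lcm}(p,q)$ in the second inclusion --- elementary, but it is exactly what ties the combinatorial order $\preceq$ to the module structure of $M$ --- together with the (standard) appeal to Dickson's lemma needed to guarantee that $\mathcal{A}_1$ is well defined and exhaustive.
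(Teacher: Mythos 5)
Your proof is correct and takes essentially the same route as the paper's: both arguments reduce membership in $\V$ to compatibility of the principal order ideal $D(u)$, converting an element $a\in M\setminus\{0\}$ with $\abs(a)\preceq u$ into a pair of distinct equivalent elements below $u$ (the paper uses $a^+,a^-$ directly where you use the shifted pair $u-a^-,\,u-a^+$), and conversely extracting from an equivalent pair $p,q\in D(u)$ the bound $\abs(p-q)\preceq\mathrm{lcm}(p,q)\preceq u$. Your explicit treatment of the second equality via well-foundedness is exactly what the paper leaves as ``clear.''
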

\begin{proof}
It is clear that 
$\bigcup_{a \in \mathcal{A}} C(a) = \bigcup_{a \in \mathcal{A}_1} C(a)$, so 
it suffices to prove the first equality. Let $v\in \V$
and suppose there exists 
$\abs(a)\in \mathcal{A}$ such that $v\in C(\abs(a))$, so $\abs(a) \in D(v)$. 
Since  $a = a^+ - a^-\equiv_M 0$, then $a^+ \equiv_M a^-$ and since 
$a^+, a^- \in D(\abs(a)) \subseteq D(v)$, we have that $D(v)$ is not 
compatible. Conversely, let $v \in  \NN \setminus \bigcup_{a \in \mathcal{A}} C(a)$
and suppose $D(v)$ is not compatible, hence there exist $u_1, u_2 \in D(v)$ 
such that $u_1 \equiv_M u_2$. If  $a = u_1-u_2 \equiv_M 0$, then $a^+ \in D(u_1)$
and $a^- \in D(u_2)$, so $\abs(a) = \mathrm{lcm}(a^+, a^-) \in D(v)$, which 
gives that $v \in C(\abs(a))$. 
\end{proof}

\begin{rem}
As a consequence of the above proposition, we see that $\esp(\V)$
is the normal basis of the monomial ideal 
$J = \left( \esp(\abs(a)) \ | \ a \in M\setminus \{0\}\right)$ 
(i.e.\ $\esp(\V)$ is a
$K$-basis of $\kx/J$). Moreover Dixon's lemma 
(see e.g.\ \cite[page~38]{mora})
ensures that $\mathcal{A}_1$ is finite. 
\label{oss1}
\end{rem}

\begin{prop} It holds:
$\mathrm{rank}\, (M) = n$ if and only if the set $\V$ is finite.
\label{tappi} 
\end{prop}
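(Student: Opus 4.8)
The plan is to reduce the statement to the normal-basis description of $\V$ furnished by Remark~\ref{oss1}. Since $\esp(\V)$ is a $K$-basis of $\kx/J$ with $J = \left(\esp(\abs(a)) \mid a \in M \setminus \{0\}\right)$, the set $\V$ is finite if and only if $\dim_K \kx/J < \infty$, i.e.\ if and only if the monomial ideal $J$ is zero-dimensional. For a monomial ideal this holds precisely when $J$ contains a pure power $x_i^{k_i}$ of each variable $x_i$ (equivalently, the complement of the staircase is bounded in every coordinate direction); I would invoke this standard fact rather than reprove it. Alternatively, one can argue directly from $\V = \NN \setminus \bigcup_{a \in \mathcal{A}_1} C(a)$, noting that $\mathcal{A}_1$ is finite and that such a complement is finite exactly when, for each $i$, some $a \in \mathcal{A}$ satisfies $a \preceq k e_i$ for a suitable $k$.

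The next step is to translate the condition ``$x_i^{k} \in J$ for some $k$'' back to the lattice $M$. Because $J$ is generated by monomials, $x_i^k \in J$ if and only if some generator $\esp(\abs(a))$ divides $x_i^k$, that is, $\abs(a) \preceq k e_i$. Here the key observation is that $\abs(a) = a^+ + a^-$ has $j$-th component $|a_j|$, hence exactly the same support as $a$; therefore $\abs(a)$ is supported on the single index $i$ if and only if $a$ is itself a nonzero multiple of $e_i$. Consequently $J$ contains a pure power of $x_i$ if and only if $M$ contains a nonzero integer multiple $c_i e_i$ of $e_i$. Combining this with the previous paragraph, $\V$ is finite if and only if $M$ contains a nonzero multiple of $e_i$ for every $i = 1, \dots, n$.

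It remains to establish the purely lattice-theoretic equivalence: $M$ contains a nonzero multiple of each $e_i$ if and only if $\mathrm{rank}(M) = n$. The reverse implication is immediate, since $n$ vectors $c_1 e_1, \dots, c_n e_n$ with all $c_i \neq 0$ are linearly independent over $\mathbb{Q}$, forcing $\mathrm{rank}(M) \geq n$, hence equality. I expect the only real (though still routine) obstacle to be the forward implication: it cannot be read off a single generator of $M$ and instead requires a clearing-of-denominators argument. Namely, if $\mathrm{rank}(M) = n$ then $M$ spans $\mathbb{Q}^n$, so each $e_i$ is a $\mathbb{Q}$-linear combination of elements of $M$; multiplying through by a common denominator $N_i$ produces $N_i e_i \in M$ with $N_i \neq 0$. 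Chaining the three equivalences then yields the proposition.
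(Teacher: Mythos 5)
Your proof is correct, but it follows a genuinely different route from the paper's. The paper argues directly at the level of compatible order ideals: for $\mathrm{rank}\,(M)=n$ it picks elements $t_ie_i \in M$ and observes that any compatible order ideal must lie inside the box $D(t_1,\dots,t_n)$ (an order ideal containing a point $u$ with $u_i \geq t_i$ would contain both $u$ and $u - t_ie_i$, which are equivalent $\mathrm{mod}\, M$), so $\V$ is finite; for $\mathrm{rank}\,(M)<n$ it exhibits the infinite compatible order ideal $\{te_i \mid t \in \mathbb{N}\}$ along an axis having no nonzero lattice point of $M$. You instead route everything through Remark~\ref{oss1}: $\V$ is the normal basis of the monomial ideal $J$, so finiteness of $\V$ becomes zero-dimensionality of $J$, which by the standard pure-power criterion, and by your observation that $\abs(a)$ has the same support as $a$, reduces to $M$ containing a nonzero multiple of each $e_i$. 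Both proofs ultimately pivot on the same lattice-theoretic fact --- $\mathrm{rank}\,(M)=n$ if and only if $M \cap \mathbb{Z}e_i \neq \{0\}$ for every $i$ --- but your write-up actually proves it (the clearing-of-denominators argument), whereas the paper simply asserts the existence of $t_ie_i \in M$ without justification; in that respect your version is more complete. What the paper's approach buys is self-containedness (nothing beyond the definition of compatibility is used, and the infinite ray is an explicit witness); what yours buys is economy given the paper's setup, since the proposition becomes nearly a corollary of Proposition~\ref{insiemeV} and a standard fact about monomial ideals. One cosmetic slip: you label as ``reverse'' the implication (multiples of each $e_i$ $\Rightarrow$ $\mathrm{rank} = n$) and as ``forward'' its converse, which is opposite to the order in which you stated the equivalence; since you prove both directions, nothing is affected.
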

\begin{proof}
If $\mathrm{rank}\, (M) = n$, then for every $i \in \{1, \dots, n\}$ 
we can find an element $t_ie_i \in M$ (where $t_i \in \mathbb{N}$ and 
$e_i$ is an element of the canonical basis of $\ZZ$). If $\oo$ is a 
compatible order ideal, then necessarily 
$\oo \subseteq  D(t_1, \dots, t_n)$. 
Therefore $\V \subseteq D(t_1, \dots, t_n)$ and is a finite set. If 
$\mathrm{rank}\, (M) < n$, then there exists $i \in \{1, \dots, n\}$ such
that $te_i \not \in M$ for all $t \in \mathbb{N}$, hence $\oo = \{
te_i \mid t \in \mathbb{N} \}$ is an infinite compatible order ideal, 
hence $\V$ is infinite. 
\end{proof}

\begin{exmp} We consider again example~\ref{ex1}, i.e.\ the module in 
$\mathbb{Z}^2$ generated by $(2, 6)$ and $(0, 10)$. Since the rank of $M$ 
is~2, the set $\V$ is finite. The set $\mathcal{A}$ is given by the 
blue dots in figure~\ref{figA}, left, the set $\mathcal{A}_1$ is the set
$\{(10, 0),\ (4, 2),\ (2, 4),\ (0, 10) \}$ and the 
set $\V$ is the shadow region in figure~\ref{figA}, right. 
\end{exmp}

\begin{figure}
\resizebox{!}{5cm}{
\includegraphics{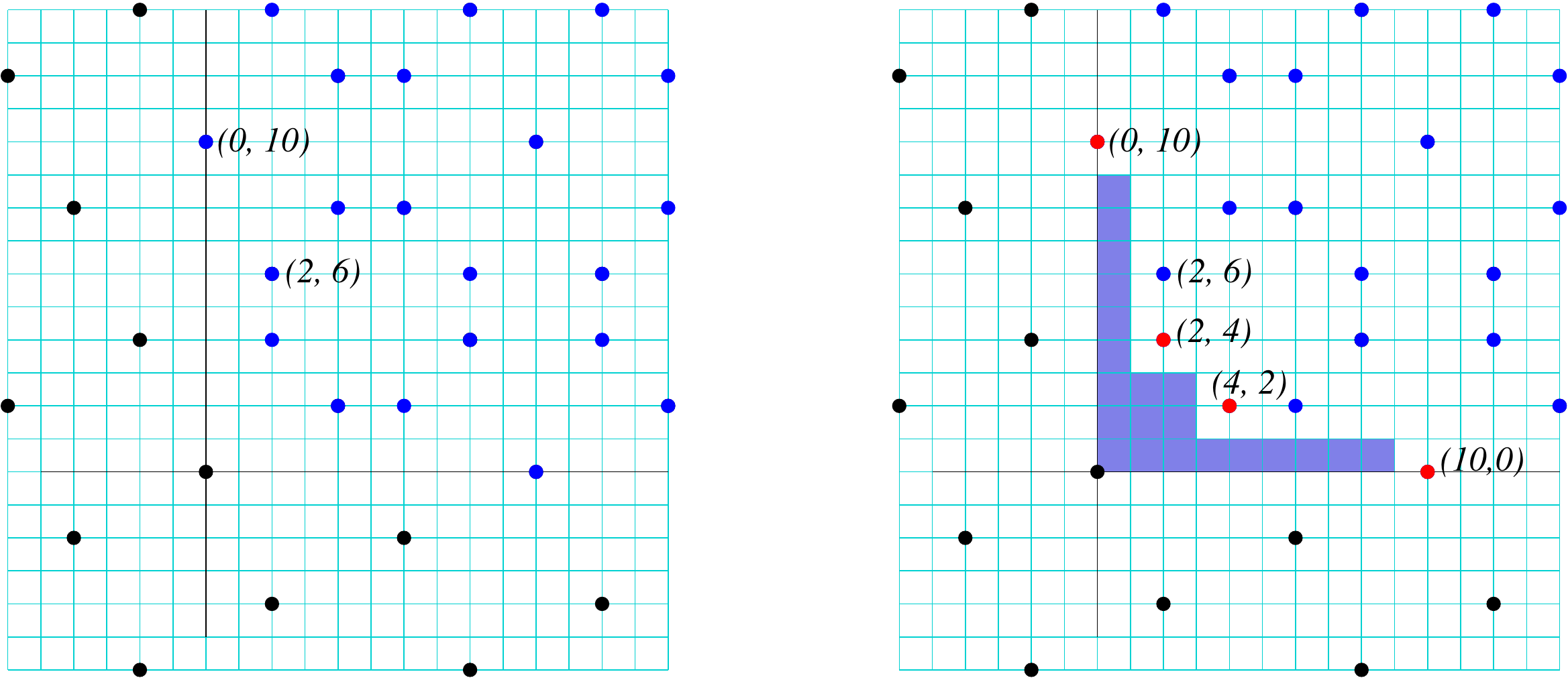}
}
\caption{On the left: the lattice generated by $(2, 6)$ and $(0, 10)$
and the set $\mathcal{A}$ (in blue); on the right: the elements 
of $\mathcal{A}_1$ (in red) and the set $\V$ (the shadow region).}
\label{figA}
\end{figure}

Let $G_\V$ be the graph whose vertexes are the elements of $\V$ and 
two vertexes $u$ and $v$ are connected by an edge if in the 
set $D(u) \cup D(v)$ there are no equivalent elements ($\mathrm{mod}\, M$).
Suppose first that $M$ has rank $m=n$. Then in this case 
$G_\V$ is a finite graph
and a maximal compatible order ideal (see 
definition~\ref{tipiOI}) corresponds to 
a \emph{maximal clique} of $G_\V$ and a maximum compatible order 
ideal (i.e.\ max-compatible) corresponds to a \emph{maximum clique} 
of $G_\V$. Therefore the 
problem of finding maximal and maximum compatible order ideals is 
reduced, at least in the case of rank $n$, to the problem of finding
maximal and maximum cliques of a (finite) graph, which can be done by the 
Bron-Kerbosch algorithm (see~\cite{bk},~\cite{ck}) (an implementation
of the Bron-Kerbosh algorithm can be found, for instance, in~\cite{sage}). 
If the rank of $M$ is less than $n$, the maximal cliques again give the
maximal compatible order ideals, but $G_\V$ is an infinite graph. 
We shall now see how to overcome this problem.

Set $P = \{(k_1, \dots, k_n)\in \NN \ \mid \ \forall j\  k_j \not= 0 \}$
and $-P = \{-k \ \mid \ k \in P\}$. We consider the following set:
\[
\mathcal{X} = 
\left\{
(c^+, c^-) \mid c \in M \setminus \left(P\cup -P \cup \{0\}\right)
\right\} \subseteq \NN \times \NN.
\]
(Note that $\mathcal{X}$ could also be defined as the set of
$(c^-, c^+)$ since, if $c\in M \setminus \left(P\cup -P \cup \{0\}\right)$, 
then also $-c\in M \setminus \left(P\cup -P \cup \{0\}\right)$).
On $\mathcal{X}$ we define a partial order $\sqsubseteq$ given by: 
\[
\mbox{if $a, b \in \mathcal{X}$, \ }
a \sqsubseteq b \mbox{\ \ if\ \ } a_0 \preceq b_0 \mbox{ and } 
a_1 \preceq b_1, \mbox { or } a_0 \preceq b_1 \mbox{ and } 
a_1 \preceq b_0
\]
(where $a_0$ and $a_1$ denote the two coordinates of $a \in \mathcal{X}$).
By $\mathcal{X}_1$ we denote the set of the minimal elements of 
$\mathcal{X}$ w.r.t.\ $\sqsubseteq$.
\begin{prop}
The partial order $\sqsubseteq$ is a well-founded order and 
the set $\mathcal{X}_1$ is finite.
\label{x1}
\end{prop}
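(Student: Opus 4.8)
The plan is to reduce everything to Dickson's lemma (see Remark~\ref{oss1}), viewing $\mathcal{X}$ as a subset of $\mathbb{N}^{2n}$ under the identification $\NN\times\NN\cong\mathbb{N}^{2n}$. Writing $\preceq$ also for the componentwise order on $\mathbb{N}^{2n}$ and letting $\sigma$ be the involution swapping the two coordinate blocks, $\sigma(a_0,a_1)=(a_1,a_0)$, the defining condition becomes simply: $a\sqsubseteq b$ precisely when $a\preceq b$ or $a\preceq\sigma(b)$. First I would record, by a short case analysis on which alternative holds at each step, that $\sqsubseteq$ is reflexive, transitive, and $\sigma$-invariant (i.e.\ the truth of $a\sqsubseteq b$ is unchanged if either entry is replaced by its $\sigma$-image). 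The one delicate point is antisymmetry: the note after the definition of $\mathcal{X}$ already remarks that $(c^+,c^-)$ and its swap $(c^-,c^+)=\sigma(c^+,c^-)$ both lie in $\mathcal{X}$, and these are mutually $\sqsubseteq$-comparable yet distinct when $c\neq 0$. A direct check shows that $a\sqsubseteq b$ together with $b\sqsubseteq a$ forces $a=b$ or $a=\sigma(b)$; thus $\sqsubseteq$ becomes a genuine partial order once $a$ and $\sigma(a)$ are identified, i.e.\ on $\mathcal{X}/\langle\sigma\rangle$, and this is the sense in which the statement is to be read.

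For well-foundedness I would use the total degree $|x|=\sum_i x_i$ as a strict monovariant, noting $|\sigma(x)|=|x|$. Since $a\sqsubseteq b$ means $a\preceq b$ or $a\preceq\sigma(b)$, it always gives $|a|\le|b|$; and if $|a|=|b|$ then the relevant componentwise inequality is forced to be an equality, yielding $a=b$ or $a=\sigma(b)$, hence $b\sqsubseteq a$ as well. Therefore a strict descent $a\sqsubset b$ implies $|a|<|b|$, so any strictly descending chain yields a strictly decreasing sequence in $\mathbb{N}$ and must terminate.

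Finally, for the finiteness of $\mathcal{X}_1$ the key step is that every $\sqsubseteq$-minimal element of $\mathcal{X}$ is already $\preceq$-minimal. Indeed, if $a\in\mathcal{X}_1$ admitted some $b\in\mathcal{X}$ with $b\prec a$ in the product order, then $b\sqsubseteq a$, and minimality would force $a\sqsubseteq b$, whence $a=b$ or $a=\sigma(b)$ by the dichotomy established above; the former contradicts $b\prec a$, and the latter gives $b\prec\sigma(b)$, forcing $b_0=b_1$ and so $b=\sigma(b)$, again a contradiction. Hence $\mathcal{X}_1$ sits inside the set of $\preceq$-minimal elements of $\mathcal{X}$, which is finite by Dickson's lemma in $\mathbb{N}^{2n}$. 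I expect the main obstacle to be exactly this swap symmetry $\sigma$: it is what stops $\sqsubseteq$ from being literally antisymmetric and must be carried through every comparison; but once the reformulation ``$a\sqsubseteq b$ iff $a\preceq b$ or $a\preceq\sigma(b)$'' is in place, each step is short and Dickson's lemma supplies the finiteness.
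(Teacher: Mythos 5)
Your proof is correct, but for the finiteness half it takes a genuinely different route from the paper. The paper handles well-foundedness with the same one-line observation (it is inherited from $\preceq$; your total-degree monovariant just makes this explicit), but it proves finiteness of $\mathcal{X}_1$ constructively: for each orthant $\epsilon \in \{-1,1\}^n$ it takes the Hilbert basis $B_\epsilon$ of $(M_\epsilon\setminus\{0\})\cap\NN$ and shows that any $c$ with $(c^+,c^-)\in\mathcal{X}_1$ must lie in $\bigcup_\epsilon \epsilon B_\epsilon$, a finite set. You instead observe that every $\sqsubseteq$-minimal element of $\mathcal{X}$ is already $\preceq$-minimal in $\mathbb{N}^{2n}$ --- via the dichotomy that $a\sqsubseteq b$ and $b\sqsubseteq a$ force $a=b$ or $a=\sigma(b)$ --- and then invoke Dickson's lemma; your case analysis for that dichotomy and for ruling out $b\prec\sigma(b)$ is sound. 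The trade-off is that your argument is shorter and more elementary, while the paper's yields an actual procedure for computing $\mathcal{X}_1$ (Hilbert bases are computable), which the paper needs later: the remark following the algorithm explicitly points to this proof as the way to carry out Step~3, whereas Dickson's lemma applied to the infinite set $\mathcal{X}$ gives finiteness with no procedure attached. One further point in your favour: you are right that $\sqsubseteq$ is not literally antisymmetric on $\mathcal{X}$, since $(c^+,c^-)$ and $(c^-,c^+)$ are mutually comparable and distinct, so $\sqsubseteq$ is only a preorder, becoming a partial order on $\mathcal{X}$ modulo the swap $\sigma$; the paper glosses over this, though nothing downstream is affected, because minimal elements of a preorder are well defined and the paper itself later uses the fact that $\mathcal{X}_1$ is closed under $\sigma$.
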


\begin{proof} The partial order $\sqsubseteq$ is well-founded since $\preceq$
is well-founded. To see that $\mathcal{X}_1$ is finite, let 
$\epsilon = (\epsilon_1, \dots, \epsilon_n)$ be such that each 
$\epsilon_i \in \{-1, 1\}$ (it is convenient to consider each
$\epsilon$ as an identifier of an orthant of 
$\ZZ$) and let $M_\epsilon = \{\epsilon x  \mid x \in M\}$, where 
$\epsilon x = (\epsilon_1x_1, \dots, \epsilon_nx_n)$.
The module $M_\epsilon$ is constructed in such a way
that the part of $M_\epsilon$ contained in the positive orthant 
corresponds to the part of $M$ contained in the 
orthant in which the signs of the coordinates are given by the 
vector $\epsilon$. Let $B_\epsilon$ be the Hilbert basis of
$(M_\epsilon \setminus \{0\})\cap \NN$ (w.r.t.\ the partial order~$\preceq$)
(see e.g.~\cite[6.1.B]{kr}). 
In particular, $B_\epsilon$ is a finite set and for every element $u$ of 
$(M_\epsilon \setminus \{0\})\cap \NN$ there exists an element $b\in B_\epsilon$ 
such that $b \preceq u$. Let $c\in M \setminus 
\left(P\cup -P \cup \{0\}\right)$ and assume the orthant of $c\;(=c^+-c^-)$ 
is given by the vector $\epsilon$. Then there exists $b\in B_\epsilon$ 
such that $b \preceq \epsilon c=c^++c^-$, hence $(\epsilon b)^+ \preceq c^+$
and $(\epsilon b)^- \preceq c^-$. {}From this it follows that 
$(c^+, c^-)\in \mathcal{X}_1 \Rightarrow c\in \cup_\epsilon \epsilon B_\epsilon$
and hence $\mathcal{X}_1$ is finite.  
\end{proof}

\begin{prop} Let $u, v \in G_{\V}$. The following are equivalent:
\begin{enumerate}
\item $u$ and $v$ are not connected;
\item there exists $a \in \mathcal{X}$ such that 
$a_0 \in D(u)$ and $a_1 \in D(v)$;
\label{due}
\item there exists $a \in \mathcal{X}_1$ such that 
$a_0 \in D(u)$ and $a_1 \in D(v)$.
\label{tre}
\end{enumerate} 
\label{nonConnessione}
\end{prop}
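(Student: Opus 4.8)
The plan is to establish the two equivalences $(1)\Leftrightarrow(2)$ and $(2)\Leftrightarrow(3)$, where the substance lies entirely in $(1)\Rightarrow(2)$. The reverse implication $(2)\Rightarrow(1)$ is immediate: if $a=(c^+,c^-)\in\mathcal{X}$ with $a_0\in D(u)$ and $a_1\in D(v)$, then $c^+,c^-\in D(u)\cup D(v)$ are equivalent $\mathrm{mod}\,M$ (because $c^+-c^-=c\in M$) and distinct (because $c\neq 0$ forces $c^+\neq c^-$, as $c^+$ and $c^-$ have disjoint supports). Hence $D(u)\cup D(v)$ contains equivalent elements, so $u$ and $v$ are not connected. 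Likewise $(3)\Rightarrow(2)$ is trivial since $\mathcal{X}_1\subseteq\mathcal{X}$.

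For $(1)\Rightarrow(2)$ I would start from two distinct elements $w_1\equiv_M w_2$ lying in $D(u)\cup D(v)$ (which exist by the definition of non-adjacency in $G_\V$), set $c=w_1-w_2\in M\setminus\{0\}$, and record the componentwise inequalities $c^+\preceq w_1$ and $c^-\preceq w_2$. The goal is to produce $a\in\mathcal{X}$ with $a_0\in D(u)$ and $a_1\in D(v)$, and for this I must (i) verify $c\notin P\cup -P$ so that $(c^+,c^-)\in\mathcal{X}$, and (ii) arrange that $c^+$ and $c^-$ are dominated by $u$ and $v$ respectively (possibly after swapping coordinates).

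Both requirements are forced by membership $u,v\in\V$ through Proposition~\ref{insiemeV}. The pivotal observation is that $w_1$ and $w_2$ cannot both lie in $D(u)$: if $c^+,c^-\preceq u$, then $\mathrm{lcm}(c^+,c^-)=c^++c^-=\abs(c)\preceq u$, i.e.\ $u\in C(\abs(c))$ with $\abs(c)\in\mathcal{A}$, contradicting $u\in\V=\NN\setminus\bigcup_{a\in\mathcal{A}}C(a)$; symmetrically they cannot both lie in $D(v)$. The same mechanism excludes the orthants $P,-P$: if $c\in P$ then $\abs(c)=c^+\preceq w_1$, and since $w_1\in D(u)\cup D(v)$ this again places $\abs(c)$ below $u$ or below $v$, a contradiction (the case $c\in -P$ uses $c^-\preceq w_2$ identically). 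Thus $(c^+,c^-)\in\mathcal{X}$ and $w_1,w_2$ are separated between $D(u)$ and $D(v)$: if $w_1\in D(u)$, $w_2\in D(v)$ then $c^+\preceq u$, $c^-\preceq v$ and $a=(c^+,c^-)$ works; in the opposite separation I invoke the symmetry of $\mathcal{X}$ under coordinate swap (the parenthetical remark after its definition) and take $a=(c^-,c^+)$ instead.

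Finally $(2)\Rightarrow(3)$ is the well-foundedness reduction of Proposition~\ref{x1}: given $a\in\mathcal{X}$ as in (2), choose $a'\in\mathcal{X}_1$ with $a'\sqsubseteq a$. Unwinding $\sqsubseteq$, either $a'_0\preceq a_0$ and $a'_1\preceq a_1$, giving $a'_0\in D(u)$ and $a'_1\in D(v)$ at once, or $a'_0\preceq a_1$ and $a'_1\preceq a_0$, in which case the swapped pair $(a'_1,a'_0)$ again lies in $\mathcal{X}_1$ (the relation $\sqsubseteq$, and hence the minimal set $\mathcal{X}_1$, is invariant under exchanging the two coordinates) and satisfies the required memberships. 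I expect the crux to be $(1)\Rightarrow(2)$, and within it the repeated appeal to $u,v\in\V$ via Proposition~\ref{insiemeV} — both to force $w_1,w_2$ into distinct $D(\cdot)$'s and to eliminate the orthants $P$ and $-P$; the remaining steps are formal manipulations of $\preceq$, $\abs$, and the swap symmetry.
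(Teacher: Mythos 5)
Your proof is correct and follows essentially the same route as the paper's: both hinge on forming $c = w_1 - w_2$ from an equivalent pair to produce $(c^+,c^-)\in\mathcal{X}$ with $c^+\in D(u)$, $c^-\in D(v)$, and on passing to a $\sqsubseteq$-minimal element combined with the swap symmetry of $\mathcal{X}_1$ to pass from (2) to (3). If anything you are more explicit than the paper, which silently assumes the equivalent pair splits as $u_1\in D(u)$, $v_1\in D(v)$ and rules out $c^+=0$ or $c^-=0$ by compatibility, whereas you justify both the split and the exclusion of $P\cup -P$ through proposition~\ref{insiemeV}.
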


\begin{proof} Suppose $u$ and $v$ are not connected, hence there exist
$u_1\in D(u)$ and $v_1\in D(v)$ such that $u_1 \equiv_M v_1$. 
Then $c = u_1 - v_1$
is an element of $M$ such that $c^+ \in D(u)$ and $c^- \in D(v)$. Moreover, 
$c^+\not =0$, and $c^-\not=0$ (if, for instance, $c^+ = 0$, then 
$c^- \equiv_M 0$ and this is a contradiction, since $v \in \V$), therefore 
$a = (c^+, c^-)\in \mathcal{X}$. If~\ref{due}.\
holds, let $b\in \mathcal{X}$ be such that \ $b \sqsubseteq a$. Then either
$b_0 \preceq a_0$ and $b_1 \preceq a_1$ (hence $b_0 \in D(u)$ and $b_1 \in
D(v)$), or $b_0 \preceq a_1$ and $b_1 \preceq a_0$ and in this case it is enough
to consider $\beta = -b$. Then $\beta \sqsubseteq a$ and $\beta_0 \in
D(u)$ and $\beta_1 \in D(v)$. {}From this~\ref{tre}.\ follows. Finally, 
if~\ref{tre} holds, then $a_0 \equiv_M a_1$ and $u$ and $v$ are not connected.
\end{proof}

We now define a partition on $\V$ (hence on the vertexes of $G_\V$)
as follows: if $u \in \V$, then we set
\begin{eqnarray}
R_u = \{ v \in \V \ | \ \mbox{for all } a \in \mathcal{X}_1, \ a_0 \in D(v) 
\mbox{ iff } a_0 \in D(u) \}.
\label{partizione}
\end{eqnarray}
Again, let us remark that $R_u$ can also be defined by:
\begin{eqnarray*}
R_u = \{ v \in \V \ | \ \mbox{for all } a \in \mathcal{X}_1, \ a_1 \in D(v) 
\mbox{ iff } a_1 \in D(u) \}
\end{eqnarray*}
since $(a_0, a_1) \in \mathcal{X}_1$ if and only if 
$(a_1, a_0) \in \mathcal{X}_1$.\\
We have
\begin{prop}
If $u_1, u_2 \in R_u$, then $u_1$ and $u_2$ are connected. Moreover, 
$u, v \in G_{\V}$ are connected if and only if every element of $R_u$ 
is connected to every element of $R_v$.
\label{connessioni}
\end{prop}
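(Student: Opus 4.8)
The plan is to reduce both assertions to the characterization of non-adjacency furnished by Proposition~\ref{nonConnessione}(\ref{tre}): two vertices $w, z \in G_\V$ fail to be connected precisely when there is some $a \in \mathcal{X}_1$ with $a_0 \in D(w)$ and $a_1 \in D(z)$. Throughout I would use two facts recorded just before the statement. First, $\mathcal{X}_1$ is symmetric, i.e.\ $(a_0, a_1) \in \mathcal{X}_1$ if and only if $(a_1, a_0) \in \mathcal{X}_1$; this is exactly what legitimizes the alternative description of $R_u$ in terms of the second coordinates $a_1$. Second, since $\V$ is the union of the compatible order ideals, for any $w \in \V$ the set $D(w)$ is compatible, hence contains no two \emph{distinct} elements equivalent $\mathrm{mod}\, M$.

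For the first assertion I would argue by contradiction. Assume $u_1, u_2 \in R_u$ are not connected; then Proposition~\ref{nonConnessione} yields $a \in \mathcal{X}_1$ with $a_0 \in D(u_1)$ and $a_1 \in D(u_2)$. Since $u_1 \in R_u$, the defining condition~(\ref{partizione}) forces $a_0 \in D(u)$; applying the second-coordinate description of $R_u$ to $u_2 \in R_u$ gives $a_1 \in D(u)$. But $a = (c^+, c^-)$ for some $c \in M \setminus \{0\}$, so $a_0 \equiv_M a_1$ while $a_0 \neq a_1$ (otherwise $c = a_0 - a_1 = 0$). Thus $D(u)$ would contain two distinct equivalent elements, contradicting that $D(u)$ is compatible. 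Hence $u_1$ and $u_2$ are connected.

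For the second assertion, the reverse implication is immediate: each $w \in \V$ lies in its own block $R_w$, since the condition in~(\ref{partizione}) holds trivially for $v = w$; so if every element of $R_u$ is connected to every element of $R_v$, then in particular $u$ and $v$ are connected. For the forward implication I would again argue by contradiction. Suppose $u, v$ are connected but some $u_1 \in R_u$ and $v_1 \in R_v$ are not; Proposition~\ref{nonConnessione} produces $a \in \mathcal{X}_1$ with $a_0 \in D(u_1)$ and $a_1 \in D(v_1)$. The first-coordinate description of $R_u$ applied to $u_1$ gives $a_0 \in D(u)$, while the second-coordinate description of $R_v$ applied to $v_1$ gives $a_1 \in D(v)$. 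By Proposition~\ref{nonConnessione} this means $u$ and $v$ are not connected, a contradiction.

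The argument is essentially bookkeeping once Proposition~\ref{nonConnessione} is in hand; the only point requiring care---and the one I would flag as the crux---is the consistent use of both coordinate descriptions of the blocks, which is precisely where the symmetry of $\mathcal{X}_1$ is essential. Everything else is a routine transfer of a divisibility condition between members of the same block.
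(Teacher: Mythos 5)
Your proof is correct and follows essentially the same route as the paper's: both parts are reduced to the non-adjacency criterion of Proposition~\ref{nonConnessione}, with the first and second coordinate descriptions of the blocks $R_u$ (justified by the symmetry of $\mathcal{X}_1$) used to transfer the divisibility conditions to $u$ and $v$. The only differences are cosmetic: you make explicit the point $a_0 \neq a_1$ that the paper leaves implicit, and you dispatch the backward implication via $u \in R_u$, $v \in R_v$ rather than by contrapositive as the paper does.
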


\begin{proof} Suppose $u_1$ and $u_2$ are not connected. Then, by 
proposition~\ref{nonConnessione}, 
there exists $a\in \mathcal{X}$ (minimal) such that $a_0\in D(u_1)$, 
$a_1 \in D(u_2)$, so $a_0 \in D(u)$ and, analogously, $a_1 \in D(u)$, but 
this gives a contradiction, since $u \in \V$ and $a_0 \equiv_M a_1$.
Suppose now that $u$ and $v$ are not connected
and let $u_1 \in R_u$ and $v_1 \in R_v$. Hence
there exists $a \in \mathcal{X}$ such that $a_0 \in D(u)$ and $a_1 \in D(v)$, 
so $a_0 \in D(u_1)$ and $a_1 \in D(v_1)$, hence $u_1$ and $v_1$ are 
not connected. If $u_1 \in R_u$ and $v_1 \in R_v$ are not connected, 
a similar argument shows that $u$ and $v$ are not connected.
\end{proof}

Consider the set $\mathcal{Y} = \{a_0 \mid a\in \mathcal{X}_1 \}\cup \{0\}\, 
(= \{a_1 \mid a\in \mathcal{X}_1 \}\cup\{0\})$, and let 
$c_1, \dots, c_l\in \NN$ be 
the elements of $\mathcal{Y}$. With every 
element $u \in \V$ we associate the $l$-tuple $s(u) = (\chi(c_i, u) \mid 
i=1,\dots, l)$, where 
\[
\chi(c_i, u) = \left\{
\begin{array}{ll}
0 & \mbox{if $c_i \not \in D(u)$}\\
1 & \mbox{if $c_i \in D(u)$}
\end{array}.
\right.
\]
\begin{prop}
For each $u\in \V$, we have:
\[
R_u = \{ v \in \V \mid s(u) = s(v) \}.
\]
\label{partizOperativa}
\end{prop}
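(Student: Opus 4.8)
The plan is to unwind both sides of the claimed equality into a single membership condition phrased purely in terms of which of the relevant lattice points divide $u$ and $v$, and then to observe that the two conditions coincide.

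First I would rewrite the defining condition for $R_u$. By~(\ref{partizione}), $v \in R_u$ holds exactly when, for every $a \in \mathcal{X}_1$, the equivalence ``$a_0 \in D(v)$ if and only if $a_0 \in D(u)$'' is satisfied. Since this refers only to the first coordinates $a_0$, it depends solely on the set of values they take, namely $\{a_0 \mid a \in \mathcal{X}_1\}$. Thus $v \in R_u$ is equivalent to requiring, for every $c \in \{a_0 \mid a \in \mathcal{X}_1\}$, that $c \in D(v) \Leftrightarrow c \in D(u)$. Repeated first coordinates among distinct elements of $\mathcal{X}_1$ cause no trouble, as the condition is about set membership.

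Next I would rewrite $s(u) = s(v)$. By the definition of $s$ and $\chi$, this means $\chi(c_i,u) = \chi(c_i,v)$ for all $i = 1, \dots, l$, that is, $c_i \in D(u) \Leftrightarrow c_i \in D(v)$ for every $c_i \in \mathcal{Y}$. The only discrepancy between this and the reformulation of $R_u$ above is the extra element $0 \in \mathcal{Y}$. But $0 \preceq w$ for every $w \in \NN$, so $0 \in D(w)$ unconditionally; hence $\chi(0,u) = \chi(0,v) = 1$ for all $u,v$, and the clause indexed by $0$ is automatically satisfied. Discarding it shows that $s(u) = s(v)$ is equivalent to ``$c \in D(u) \Leftrightarrow c \in D(v)$ for every $c \in \{a_0 \mid a \in \mathcal{X}_1\}$'', which is exactly the reformulated condition for $v \in R_u$.

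There is no genuine obstacle here: the statement is a direct unfolding of definitions, and the only point needing a word of justification is the harmless presence of $0$ in $\mathcal{Y}$, handled by the observation that $0$ divides every monomial. For completeness I would also recall, as the text already notes, that $\mathcal{Y}$ admits the symmetric description via the second coordinates $a_1$, so the alternative form of $R_u$ produces the same tuple $s$; but this symmetry is not needed for the equivalence itself.
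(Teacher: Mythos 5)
Your proof is correct: unfolding the definition of $R_u$ and of $s$, and noting that the extra element $0\in\mathcal{Y}$ contributes a trivially satisfied clause since $0\in D(w)$ for every $w\in\NN$, gives exactly the stated equality. This is the same (purely definitional) argument the paper has in mind --- its own proof consists of the single word ``Immediate'' --- so you have simply made explicit what the authors left implicit.
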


\begin{proof} Immediate. \end{proof}
\begin{cor}
The set $\{R_u \mid u \in \V\}$ is a finite set.
\label{partFinita}
\end{cor}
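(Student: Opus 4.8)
The plan is to show that the map $u \mapsto s(u)$ has finite image, since by Proposition~\ref{partizOperativa} the blocks $R_u$ are exactly the fibers of this map, and a partition into fibers has as many blocks as the map has distinct values.

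First I would observe that $s$ takes values in $\{0,1\}^l$, the set of $l$-tuples with entries in $\{0,1\}$. This set has exactly $2^l$ elements, which is finite because $l$ is finite: indeed $l = |\mathcal{Y}|$, and $\mathcal{Y} = \{a_0 \mid a \in \mathcal{X}_1\} \cup \{0\}$ is finite since $\mathcal{X}_1$ is finite by Proposition~\ref{x1}.

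Then I would conclude as follows. By Proposition~\ref{partizOperativa}, for each $u \in \V$ we have $R_u = \{v \in \V \mid s(v) = s(u)\}$, so two sets $R_u$ and $R_{u'}$ coincide precisely when $s(u) = s(u')$ and are disjoint otherwise. Hence the distinct blocks in $\{R_u \mid u \in \V\}$ are in bijection with the distinct values taken by $s$ on $\V$, namely with $s(\V) \subseteq \{0,1\}^l$. Since $|s(\V)| \leq 2^l < \infty$, the collection $\{R_u \mid u \in \V\}$ is finite, as claimed.

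This statement is essentially a corollary of the preceding proposition (whose own proof the authors dismiss as ``immediate''), so there is really no substantial obstacle: the only thing to verify is the finiteness of the indexing set $\mathcal{Y}$, and that is already guaranteed by Proposition~\ref{x1}. The bound $2^l$ on the number of blocks is crude but suffices; no sharper estimate is needed for the finiteness conclusion.
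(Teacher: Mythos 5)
Your proof is correct and follows exactly the paper's argument: the paper's entire proof is the observation that the $l$-tuples $s(u)$ can only take finitely many values, and you have simply made this explicit by bounding the image of $s$ by $\lvert\{0,1\}^l\rvert = 2^l$ and invoking Proposition~\ref{partizOperativa} to identify the sets $R_u$ with the fibers of $s$, together with the finiteness of $\mathcal{X}_1$ (Proposition~\ref{x1}) to guarantee that $l$ is finite. No further comment is needed.
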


\begin{proof}
The $l$-tuples $s(u)$ can only assume a finite number of values. 
\end{proof}

We recall that a hyper-rectangle of $\NN$ is a set of points 
$(a_1, \dots, a_n) \in \NN$, such that each coordinate $a_i$ is subject
to a condition of the form $l_i \leq a_i < L_i$ where $l_i \in \mathbb{N}$
and $L_i\in \mathbb{N}\cup \{+\infty\}$. 
\begin{prop}
Each set $R_u$ is a finite union of hyper-rectangles.
\label{hyperrect}
\end{prop}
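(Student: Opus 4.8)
The plan is to characterize the membership in $R_u$ purely through the finite data $s(u)$, and then show that for a fixed value of the "signature" $s(u)$, the set of points $v \in \V$ achieving that signature is a finite union of hyper-rectangles. By Proposition~\ref{partizOperativa} we know that $R_u = \{v \in \V \mid s(v) = s(u)\}$, so it suffices to prove that for each admissible $l$-tuple $\sigma \in \{0,1\}^l$, the set $S_\sigma = \{v \in \V \mid s(v) = \sigma\}$ is a finite union of hyper-rectangles. Since there are only finitely many signatures (Corollary~\ref{partFinita}), the collection $\{R_u\}$ is exactly the finite collection of nonempty $S_\sigma$, and proving the claim for each one establishes the proposition.

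First I would unwind the definition of $s(v)$. Recall $s(v) = (\chi(c_i, v))_{i=1}^l$ where $c_i$ ranges over the finite set $\mathcal{Y} = \{a_0 \mid a \in \mathcal{X}_1\} \cup \{0\}$, and $\chi(c_i, v) = 1$ iff $c_i \in D(v)$, i.e.\ iff $c_i \preceq v$. Thus fixing $s(v) = \sigma$ amounts to imposing, for each index $i$, either the condition $c_i \preceq v$ (when $\sigma_i = 1$) or its negation $c_i \not\preceq v$ (when $\sigma_i = 0$). A condition of the form $c_i \preceq v$ is a conjunction of lower bounds $v_j \geq (c_i)_j$ on the coordinates, which already cuts out a single hyper-rectangle (a "positive" constraint, intersecting nicely). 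The subtlety lies in the negative constraints $c_i \not\preceq v$: each such condition is a \emph{disjunction} $\bigvee_{j} \big(v_j < (c_i)_j\big)$ over the coordinates $j$ with $(c_i)_j > 0$, and it is this disjunctive, non-convex shape that prevents the answer from being a single hyper-rectangle.

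The key combinatorial step is therefore to handle these disjunctions. The set $S_\sigma$ is the intersection of $\V$ with finitely many positive constraints and finitely many negative constraints. I would rewrite the intersection of all negative constraints, each a finite disjunction of the half-spaces $\{v_j < (c_i)_j\}$, by distributing the intersection over the disjunctions: a finite intersection of finite unions of half-spaces is a finite union of finite intersections of half-spaces. Each such intersection, combined with the single hyper-rectangle coming from the positive constraints, is itself a hyper-rectangle (a conjunction of upper and lower bounds on the individual coordinates, with $+\infty$ allowed as an upper bound). Hence $S_\sigma \cap \NN$ before intersecting with $\V$ is a finite union of hyper-rectangles.

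The remaining point, which is the one requiring care, is that we intersect with $\V$ itself. By Proposition~\ref{insiemeV}, $\V = \NN \setminus \bigcup_{a \in \mathcal{A}_1} C(a)$, so intersecting with $\V$ means removing finitely many sets $C(a) = \{v \mid a \preceq v\}$, each of which is again a single hyper-rectangle defined by lower bounds only. Thus $S_\sigma = \big(\bigcup_k H_k\big) \setminus \bigcup_{a \in \mathcal{A}_1} C(a)$ for finitely many hyper-rectangles $H_k$. The hard part will be verifying that this set-difference preserves the "finite union of hyper-rectangles" property: set-difference is not obviously closed in this class, because subtracting a hyper-rectangle from a hyper-rectangle can again introduce non-convexity. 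The cleanest way around this is to observe that removing $C(a)$ from a hyper-rectangle $H$ is the same as intersecting $H$ with the complement of $C(a)$, and the complement of $C(a)$ is exactly the set cut out by the single negative constraint $a \not\preceq v$, which we have just seen is a finite union of hyper-rectangles. Since the class of finite unions of hyper-rectangles is closed under finite intersection (each hyper-rectangle being closed under intersection, and intersection distributing over union), iterating this over the finitely many $a \in \mathcal{A}_1$ keeps us within the class, completing the proof.
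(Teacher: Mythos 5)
Your proof is correct and follows essentially the same route as the paper's: both characterize $R_u$ by the signature conditions (coordinate lower bounds when $c_i \in D(u)$, disjunctions of strict upper bounds when $c_i \notin D(u)$) together with the further constraints coming from $v \in \V$ via Proposition~\ref{insiemeV}. The only difference is one of detail: you make explicit the distribution of intersections over the disjunctive constraints and the closure of the class of finite unions of hyper-rectangles under finite intersection, steps the paper compresses into the phrase ``considering all these bounds.''
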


\begin{proof} Let $v \in R_u$ and $c_i \in \mathcal{Y}$. If 
$c_i \in D(u)$, then $c_i \in D(v)$, so each coordinate $v_j$ of 
$v$ is such that $c_{ij} \leq v_j$ (where $c_{ij}$ is the $j$-th coordinate 
of $c_i$). If $c_i \not\in D(u)$, then $c_i \not\in D(v)$, so 
there exists a $k$ such that $v_k < c_{ik}$. Moreover, $v\in \V$ and 
proposition~\ref{insiemeV} give some further upper bounds for the coordinates 
of $v$. Considering all these bounds we see that the elements 
of $R_u$ are subject to a finite number of conditions each of which 
defines a hyper-rectangle. 
\end{proof}

Starting from the partition $R_u, u\in \V$ of the vertexes of $G_\V$, we 
can construct the quotient graph $\widetilde{G}_\V$ whose vertexes are the 
elements of the partition and two vertexes $R_u$ and $R_v$ of 
$\widetilde{G}_\V$ are connected 
if and only if $u$ and $v$ are connected in $G_\V$. According to 
proposition~\ref{connessioni}, the connection is well-defined; moreover, by
corollary~\ref{partFinita}, $\widetilde{G}_\V$ is a finite graph. 

A clique of $G_\V$ is a compatible order ideal. A 
maximal clique of $G_\V$ is a maximal, compatible order ideal and gives a 
maximal clique of $\widetilde{G}_\V$. Conversely, from 
a maximal clique of $\widetilde{G}_\V$, taking the union of its vertexes
(considered as sets), we get a maximal clique of $G_\V$ which
is a maximal, compatible order ideal. Hence, the above constructions
allow us to obtain all the maximal compatible order ideals associated with
a module $M$. 

In a maximal compatible order ideal, by definition, all the
elements are different $\mathrm{mod}\; M$, but it is not true, in general, 
that a maximal compatible order ideal contains a representative of all the
elements of $\ZZ/M$, in other words, not all maximal compatible order 
ideals are also max-compatible (as defined in section~\ref{sez2}).
However among the maximal compatible 
order ideals there are all the max-compatible ones.
The finiteness of $\widetilde{G}_\V$ then yields:
\begin{prop}
Given a module $M \subseteq \ZZ$, there are only finitely many 
max-compatible order ideals associated with $M$. 
\label{finiti-maxC}
\end{prop}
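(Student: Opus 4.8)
The plan is to derive the finiteness of the max-compatible order ideals as a formal consequence of the finiteness of the quotient graph $\widetilde{G}_\V$, which was established in corollary~\ref{partFinita}, once one knows that every order ideal in question arises from a clique of that finite graph. First I would invoke the inclusion recorded just before the statement: every max-compatible order ideal is in particular a maximal compatible one. It therefore suffices to bound the number of maximal compatible order ideals, that is, the number of maximal cliques of $G_\V$.

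The central step is to show that each maximal clique $C$ of $G_\V$ is a union of \emph{whole} classes of the partition $\{R_u\}$. By proposition~\ref{connessioni} all the elements of a single class $R_u$ are pairwise connected, and the connection relation between two classes is all-or-nothing; so if $C$ meets $R_u$ in at least one vertex, then $C \cup R_u$ is still a clique, and maximality of $C$ forces $R_u \subseteq C$. Consequently $C = \bigcup_{R_u \in \Gamma} R_u$ for a suitable family $\Gamma$ of classes, and, again by proposition~\ref{connessioni}, $\Gamma$ is a clique of $\widetilde{G}_\V$; maximality of $C$ makes $\Gamma$ a maximal clique of $\widetilde{G}_\V$.

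The union map $\Gamma \mapsto \bigcup_{R_u \in \Gamma} R_u$ then sends the maximal cliques of $\widetilde{G}_\V$ onto the maximal cliques of $G_\V$, and since the classes $R_u$ partition $\V$ this map is injective. By corollary~\ref{partFinita} the graph $\widetilde{G}_\V$ is finite and hence has only finitely many maximal cliques; therefore $G_\V$ has only finitely many maximal cliques, i.e.\ there are only finitely many maximal compatible order ideals, and \emph{a fortiori} only finitely many max-compatible ones.

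The only genuinely non-routine point is the ``union of whole classes'' step: it is where the definition of the partition and proposition~\ref{connessioni} convert the potentially infinite clique problem on $G_\V$ into a finite clique problem on $\widetilde{G}_\V$; everything else is bookkeeping. I expect the sole delicate issue to be invoking the all-or-nothing connectivity between classes correctly, so that $\Gamma$ is well-defined as a subgraph of $\widetilde{G}_\V$ and so that maximality transfers cleanly in both directions.
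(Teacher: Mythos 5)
Your proof is correct and takes essentially the same route as the paper: the paper deduces the proposition from the correspondence between maximal cliques of $G_\V$ and maximal cliques of the finite quotient graph $\widetilde{G}_\V$ (corollary~\ref{partFinita}), together with the observation that every max-compatible order ideal is maximal compatible. Your ``union of whole classes'' step merely makes explicit, via proposition~\ref{connessioni}, what the paper asserts without detail when it says a maximal clique of $G_\V$ gives a maximal clique of $\widetilde{G}_\V$ and conversely.
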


To conclude this section, we sketch here an algorithm which allows 
to compute all the maximal compatible order ideals of a given module $M$.
It can be summarized as follows:

\begin{alg}[Computation of maximal compatible order ideals] \mbox{}\\
\textsc{Input:} A module (lattice) $M\subseteq \ZZ$ given 
by a finite set of generators.\\
\textsc{Output:} All the maximal compatible order ideals w.r.t.\ $M$.
\begin{description}
\item{\emph{Step 1.}} Compute the set $\mathcal{A}_1$ of 
minimal elements of $\mathcal{A}$ w.r.t.\ the partial order~$\preceq$;
\item{\emph{Step 2.}} Let $\V = \NN \setminus 
\bigcup_{a\in \mathcal{A}_1} C(a)$;
\item{\emph{Step 3.}} Construct the set $\mathcal{X}_1$ of the minimal 
elements of
$M \setminus \left(P \cup -P \cup \{0\}\right)$ 
w.r.t.\ $\sqsubseteq$; 
\item{\emph{Step 4.}} Define the partition on $\V$ as in (\ref{partizione})
and \ref{partizOperativa};
\item{\emph{Step 5.}} Construct the graph $\widetilde{G}_\V$ whose 
vertexes are the elements of the above partition and two vertexes $R_u$ and 
$R_v$ are not connected if and only if there exists $a\in \mathcal{X}_1$ such
that $a_0 \in D(u)$ and $a_1 \in D(v)$ (see proposition~\ref{nonConnessione});
\item{\emph{Step 6.}} Compute the 
maximal cliques of~$\widetilde{G}_\V$;
\item{\emph{Step 7.}} Recover the maximal cliques of~$G_\V$: if $R_{u_1}, \dots, 
R_{u_k}$ is a maximal clique of~$\widetilde{G}_\V$, then the corresponding
maximal clique of~$G_\V$ is $R_{u_1}\cup \cdots \cup R_{u_k}$.
\item{\emph{Step 8.}} Return all the maximal cliques computed in Step 7.
\end{description}
\end{alg}

\begin{rem}
The computation of $\mathcal{A}_1$ in step 1 can be done in a finite 
number of steps (according to 
remark~\ref{oss1}, the problem is equivalent to the problem of finding 
a minimal set of generators of a monomial ideal; one way to proceed, is
suggested in the proof of proposition~\ref{x1}); the set $\V$ of step 2
can be infinite, but is described in finite terms; a possible construction
of $\mathcal{X}_1$ is given again in the proof of proposition~\ref{x1}; 
to get 
the partition of $\V$ in step 4 it is enough to find elements 
$u\in \V$ such that the $l$-tuples $s(u)$ assume all the possible (finite)
values.
\end{rem}

\begin{exmp} 
\label{ex1111@}
We consider again the module of example~\ref{ex1}.
Since in $M$ we have the elements $(-2, 4)$ and $(6, -2)$, in $\mathcal{X}$
we have, among others, the four elements $((2, 0), (0, 4))$, $((6, 0), (0, 2))$
and $((0, 4), (2, 0))$, $((0, 2), (6, 0))$. It is easy to see that these
four elements are all the elements of $\mathcal{X}_1$.\\
Starting from $\mathcal{X}_1$ we can 
divide the set $\V$ (showed in figure~\ref{figA}, right) into six regions,
according to~(\ref{partizione}). The six regions
are $R_{(0, 0)}$, $R_{(0, 2)}$, $R_{(0, 4)}$, $R_{(2, 0)}$, 
$R_{(2, 2)}$, $R_{(6, 0)}$ and in figure~\ref{fig1B} are labeled, respectively, 
by $A, B, C, D, E$ and $F$. 
Hence $\widetilde{G}_\V$ has 6 vertexes and the edges that
are not connected (according to proposition~\ref{nonConnessione}) are: 
$BF, CD, CE, CF, EF$. The maximal cliques of 
$\widetilde{G}_\V$ are $(A, B, C)$, $(A, B, D, E)$ and $(A, D, F)$. 
They correspond to the maximal cliques of $G_\V$ which are 
$(A\cup B \cup C)$,
$(A \cup B \cup D \cup E)$ and $(A\cup D\cup F)$. These sets 
(each of 20 elements) are all the maximal compatible order ideals 
(w.r.t.\ $M$) and are all maximum (note that $\widetilde{G}_\V$ has only 
one maximum clique, which is $(A, B, D, E)$, hence it is evident that
maximum cliques of $G_\V$  in general do not correspond to 
maximum cliques of $\widetilde{G}_\V$). 
\end{exmp}

\begin{figure}
\resizebox{!}{4.5cm}{
\includegraphics{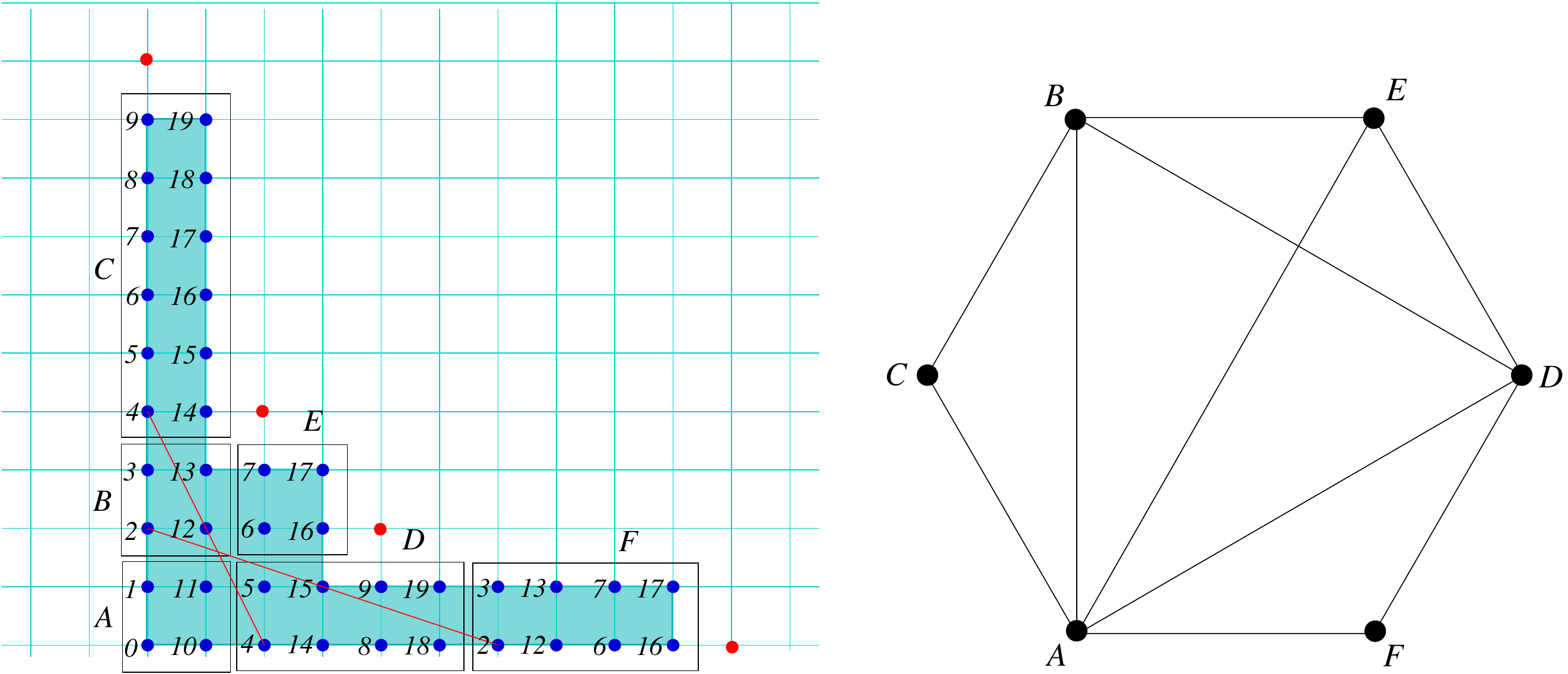}
}
\caption{On the left: the lattice of example~\ref{ex1} and the set $\V$ 
(in light blue), whose elements are labeled
according to~(\ref{numB}) and divided into the regions $A$, $B$, \dots,  
$F$. The two red segments represent the elements of $\mathcal{X}_1$.
On the right we have the graph $\widetilde{G}_\V$, whose vertexes are 
the regions $A, B, \dots, F$.}
\label{fig1B}
\end{figure}

\begin{exmp} 
\label{ex2}
Let $M = \langle (2, 1, 4), (0, 3, -3)\rangle\subseteq 
\mathbb{Z}^3$. $M$ is a module of rank 2 in $\mathbb{Z}^3$. The set 
$\mathcal{A}_1$ is 
$\{(0, 3, 3), (2, 1, 4), (2, 4, 1), (6, 0, 15), (6, 15, 0)\}$. 
The set $\V$ is
therefore the complement (in $\mathbb{N}^3$) of the cones $C(a)$ where
$a\in \mathcal{A}_1$. The set $\mathcal{X}_1$ is:  
\[
\begin{array}{c}
((4, 11, 0), (0, 0, 1)),\ ((0, 3, 0), (0, 0, 3)),\ ((4, 0, 11), (0, 1, 0)),\\
((2, 0, 7), (0, 2, 0)),\ ((2, 7, 0), (0, 0, 2))
\end{array}
\]
and 5 other couples obtained inverting the above couples. Therefore the
set $\mathcal{Y}$ is:
\[ 
\begin{array}{c}
(0, 0, 0), (4, 11, 0), (0, 0, 1), (0, 3, 0), (0, 0, 3), (4, 0, 11),\\
 (0, 1, 0), (2, 0, 7), (0, 2, 0), (2, 7, 0), (0, 0, 2)
\end{array}
\]
and we get a partition of $\V$ into $19$ classes $R_u$, 
where $u$ is one of the following points of $\mathbb{Z}^3$:
\[
\begin{array}{c}
(0, 0, 0), (0, 0, 1), (0, 0, 2), (0, 0, 3), (0, 1, 0), (0, 1, 1), (0, 1, 2), \\
(0, 1, 3), (0, 2, 0), (0, 2, 1), (0, 2, 2), (0, 2, 3), (0, 3, 0), \\
(0, 3, 1), (0, 3, 2), (2, 0, 7), (2, 7, 0), (4, 0, 11), (4, 11, 0).
\end{array}
\]
For instance, $R_{(0,0,0)}$ is the hyper-rectangle 
$\{(i, 0, 0) \mid i \in \mathbb{N}\}$, 
while $R_{(4, 0, 11)}$ is given by the union of two hyper-rectangles:
\[
R_{(4, 0, 11)} = \left\{
(i, 0, h) \mid i\geq 4, \ 11\leq h \leq 14
\right\} \cup \left\{(i, 0, h) \mid 4 \leq i \leq 5,\ h\geq 15
\right\}.
\]
The graph $\widetilde{G}_\V$ has 19 vertexes; the computation of the 
maximal cliques gives 6 elements, hence the module $M$ has 6 maximal order 
ideals. An 
example of a maximal clique is given by:
$R_{(0,0,0)}$, $R_{(0, 0, 1)}$, $ R_{(0, 0, 2)}$, $ R_{(0, 0, 3)}$, $ R_{(2, 0, 7)}$, 
$R_{(4, 0, 11)}$ and the union of these sets gives the corresponding
maximal order ideal $\oo = H_1 \cup H_2$, where
\[
H_1 = \{(i, 0, j)\mid i\geq 0, \ 0 \leq j \leq 14\}, \quad
H_2 = \{(i, 0, j) \mid 0 \leq i \leq 5, \ j \geq 15 \}.
\]
\end{exmp}
In general it is not true that maximal compatible order ideals are 
max-compatible. Here is an example in the case of a rank 3 module in 
$\mathbb{Z}^3$: let 
$M = \langle (1, 1, 2), (0, 3, 1), (0, 0, 4)\rangle \subseteq \mathbb{Z}^3$;
the above algorithm gives 23 maximal compatible order ideals, 19 of 12
elements (maximum), 2 of 9 elements and 2 of 8 elements. In particular:
$D(2, 0, 0) \cup D(0, 2, 0) \cup D(0, 0, 3)$ is a compatible order ideal 
with 8 elements which is maximal but not maximum.

When $\mathrm{rank}\; M = n$, the max-compatible order ideals, as shown in the 
above example, can easily be selected counting their elements. When the 
rank of $M$ is less than $n$, 
it is necessary to have another criterion to select, from the maximal 
order ideals, the max-compatible ones. 

One possible way to proceed is to check if $\rho(\oo) = B$, where the 
map $\rho$ is described in section~\ref{sez2} (and is obtained by successive 
divisions by the rows of $M$). The order
ideal $\oo$ is a finite union of $R_u$'s, hence $\oo$ is a finite union 
of hyper-rectangles by proposition~\ref{hyperrect}.
It is possible to show that, using the pivot elements of the matrix 
$M$, the image under $\rho$ of a hyper-rectangle can be decomposed 
into a finite union of 
sets of points of the form $(F_1(i_1, \dots, i_r),  \dots,
F_n(i_1, \dots, i_r))$, where $F_1, \dots, F_n$ are linear functions and 
$i_1, \dots, i_r$ are integer numbers
bounded by suitable inequalities. From this it follows that the 
image under the map $\rho$ of a hyper-rectangle can be described in finite
terms and the check $\rho(\oo) = B$ can be done algorithmically. We note, 
moreover, that the sketched construction allows to also obtain the map 
$\sigma : B \longrightarrow \oo$ which is the inverse of~$\rho$.

An example can clarify this construction: take the order ideal $\oo$
considered in example~\ref{ex2}, which is the union of the hyper-rectangles 
$H_1$ and $H_2$ and let us see how to compute $\rho(H_1)$ and $\rho(H_2)$. 
Since the pivot element of the first row of $M$ is 2, in order to compute 
the reduction of $H_1$ by the matrix $M$, we decompose 
$H_1$ into two disjoint parts: 
$H_1 = \{(2h, 0, j) \} \cup \{(2h+1, 0, j) \}$, ($h \geq 0$). 
The elements of the form $(2h, 0, j)$ are reduced w.r.t.\ the first row 
of $M$ to $(0, -h, j-4h)$ (while $(2h+1, 0, j)$ reduces to $(1, -h, j-4h)$).
The pivot element of the second row of $M$ is $3$, so we consider three 
cases: $h=3l$, $h=3l+1$, $h=3l+2$ ($l\geq 0$). For instance, in case
$h = 3l$,  the elements $(0, -h, j-4h)$ become $(0, -3l, j-12l)$ 
which reduce to $(0, 0, j-15l)$. In this way we can see that 

\[
\rho(H_1) = \{(i, 0, j) \mid j \leq 14\} \cup \{(i, 1, j) \mid j \leq 3 \}
\cup \{(i, 2, j) \mid  j \leq 7\}
\]
and, similarly, 
\[
\rho(H_2) = \{(i, 0, j) \mid j \geq 15\} \cup \{(i, 1, j) \mid j \geq 4 \}
\cup \{(i, 2, j) \mid  j \geq 8\}
\]
where $0 \leq i \leq 1$, and it is clear that $\rho(H_1) \cup \rho(H_2) = B$,
since $B = \{(i, j, h) \mid 0 \leq i \leq 1, \ 0 \leq j \leq 2, \ h \in 
\mathbb{Z}\}$.\\
Repeating these constructions for all the order ideals
obtained in example~\ref{ex2}, it is possible to verify that all these 
order ideals are indeed max-compatible.

\begin{prop}
If $\oo$ is a max-compatible order ideal and $z\in \ZZ$, then it is possible
to compute an element $b \in \oo$ such that $b \equiv_M z$.
\label{repr}
\end{prop}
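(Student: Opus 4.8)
The plan is to exploit the bijection between $\oo$ and $B$ furnished by the reduction map $\rho$ of Section~\ref{sez2}. Because $\oo$ is compatible, two distinct elements of $\oo$ are never equivalent $\mathrm{mod}\, M$; since $\rho(a)=\rho(b)$ iff $a\equiv_M b$, this says that $\rho$ is injective on $\oo$. Because $\oo$ is max-compatible, every $w\in B\subseteq\ZZ$ has a representative $a\in\oo$ with $a\equiv_M w$, whence $\rho(a)=\rho(w)=w$ (as $w\in B$ is its own representative); thus $\rho(\oo)=B$. Together these say exactly that $\rho$ restricts to a bijection $\rho|_{\oo}\colon \oo\to B$, and I would denote by $\sigma\colon B\to\oo$ its inverse and set $b=\sigma(\rho(z))$. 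The equivalence is then immediate: by construction $\rho(b)=\rho(z)$, and since $\rho(x)\equiv_M x$ for every $x\in\ZZ$, we get $b\equiv_M\rho(b)=\rho(z)\equiv_M z$. Hence $b\in\oo$ is the desired element, and what actually has to be proved is that it can be \emph{computed}; i.e.\ that both $\rho$ and $\sigma$ are effective.

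Computing $\rho(z)$ is routine: it is carried out by the repeated-division procedure of Section~\ref{sez2}, reducing $z$ successively by the rows of the Hermite matrix~(\ref{matr1}) until the first $m$ coordinates fall into the prescribed ranges, so that the result lies in $B$. The delicate point is the effectiveness of $\sigma$, and here I would lean on the explicit description of $\rho$ on $\oo$ worked out just before the statement. By proposition~\ref{hyperrect} the order ideal $\oo$ is a finite union of hyper-rectangles, and on each such hyper-rectangle the image under $\rho$ was shown to decompose into finitely many parametrized families of the form $(F_1(i_1,\dots,i_r),\dots,F_n(i_1,\dots,i_r))$, with $F_1,\dots,F_n$ linear and $i_1,\dots,i_r$ ranging over integers subject to explicit inequalities. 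This is precisely a finite, effective presentation of the graph of $\rho|_{\oo}$, and therefore of $\sigma$.

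Concretely, to evaluate $\sigma$ at $w=\rho(z)$ I would scan these finitely many families; for each one I would solve the linear system $F_j(i_1,\dots,i_r)=w_j$ ($j=1,\dots,n$) for the integer parameters $i_1,\dots,i_r$ and test the accompanying inequalities. Since $\rho|_{\oo}$ is a bijection onto $B$, exactly one family admits a solution satisfying the bounds, and substituting it back into the parametrization of the corresponding hyper-rectangle yields the unique $b\in\oo$ with $\rho(b)=w$. The main obstacle is thus not the $\mathrm{mod}\, M$ equivalence, which is purely formal, but ensuring that this preimage search terminates with a unique admissible answer; this is exactly what the finiteness of the hyper-rectangle decomposition, combined with the bijectivity of $\rho|_{\oo}$ guaranteed by max-compatibility, secures.
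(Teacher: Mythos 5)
Your proof is correct, but it takes a genuinely different route from the paper's own argument. The paper never passes through $B$, $\rho$ or $\sigma$ in its proof of proposition~\ref{repr}: it only invokes proposition~\ref{hyperrect} to write $\oo$ as a finite union of hyper-rectangles, and then, for a single hyper-rectangle $R$, asks directly whether there exists $b\in R$ with $b\equiv_M z$. Writing $b=z+\sum_k\lambda_k m_k$, where the $m_k$ generate $M$ and the $\lambda_k$ are unknown integers, the defining inequalities of $R$ turn this question into a system of linear Diophantine equations and inequalities in the $\lambda_k$, solvable algorithmically; max-compatibility guarantees that at least one hyper-rectangle of $\oo$ yields a solution. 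Your route instead computes $b=\sigma(\rho(z))$: reduce $z$ to its canonical representative in $B$, then invert $\rho|_{\oo}$ using the parametrized families $(F_1(i_1,\dots,i_r),\dots,F_n(i_1,\dots,i_r))$ that describe $\rho$ on each hyper-rectangle. This is precisely the alternative method the paper itself mentions in section~\ref{bb} (``it is enough to compute $\sigma(\rho(b))$ \dots\ or to use proposition~\ref{repr}''), so it is legitimate, but note two points. First, your argument inherits the status of that parametrized description of $\rho$ on hyper-rectangles, which the paper only sketches (``It is possible to show that\dots''); the paper's own proof deliberately avoids this dependency and needs nothing beyond proposition~\ref{hyperrect} and standard Diophantine solving. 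Second, your claim that \emph{exactly one} family admits an admissible solution is slightly too strong: the hyper-rectangles, and hence the families, are not guaranteed to be pairwise disjoint, so several families may contain the unique preimage point; what your algorithm actually needs is only that every admissible solution parametrizes the same point $b$, which follows from the injectivity of $\rho|_{\oo}$ that you established. As for what each approach buys: yours amortizes work, since once $\sigma$ is assembled, representatives for many different $z$ cost a reduction plus a preimage lookup, which is why the paper uses $\sigma\circ\rho$ for the border-basis computation; the paper's proof is more self-contained and solves one fresh, small Diophantine system per query.
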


\begin{proof} According to \emph{Step 7} of the previous algorithm, the 
order ideal $\oo$ is a union of sets of the form $R_u$, where each $R_u$
is a finite union of hyper-rectangles (proposition~\ref{hyperrect}), hence it 
is enough to solve the following problem: given a hyper-rectangle $R$ and 
an element $z \in \ZZ$, check if there exists $b\in R$ such that 
$b \equiv_M z$. It is easy to see that this problem can be converted into
the problem of solving a set of linear Diophantine 
equations. 
\end{proof}

\section{Border bases} 
\label{bb}
If $I_M$ is a lattice ideal given by the lattice
$M\subseteq \ZZ$ and if $\oo$ is a max-compatible ($\mathrm{mod}\, M$)
order ideal of $\NN$, then the set $\esp(\oo)$ is an order ideal 
of $\kx$ and is a basis of $\kx/I_M$ as a $K$-vector space and we can define
the $\esp(\oo)$-border basis of $I_M$, which is given by:
\begin{eqnarray}
G_\oo = \{\esp(b) - \esp(\bar{b}) \ | \ \mbox{for } b \in \partial(\oo)\}
\label{bbase}
\end{eqnarray}
where $\bar{b}$ is the representative of $b$ in $\oo$. Since a border
basis is constructed starting from a max-compatible order ideal, as 
a consequence of proposition~\ref{finiti-maxC} we have:
\begin{prop}
The number of border bases of any lattice ideal $I_M$ (where 
$\mathrm{rank}\, M \leq n$) is finite.
\end{prop}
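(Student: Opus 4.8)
The plan is to exhibit the assignment $\oo \mapsto G_\oo$, sending a max-compatible order ideal to its associated border basis via formula~(\ref{bbase}), as a bijection onto the set of all border bases of $I_M$, and then to invoke the finiteness of its source established in Proposition~\ref{finiti-maxC}.

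First I would check that the assignment is well defined. Given a max-compatible (mod $M$) order ideal $\oo$ of $\NN$, the set $\esp(\oo)$ is an order ideal of $\kx$ and a $K$-basis of $\kx/I_M$, so the border $\partial(\oo)$ is determined by $\oo$ alone. For each $b \in \partial(\oo)$ the representative $\bar{b}$ of $b$ in $\oo$ exists because $\oo$ is max-compatible, and is \emph{unique} because $\oo$ is compatible (no two distinct elements of $\oo$ are equivalent mod $M$). Hence~(\ref{bbase}) produces a single, well-defined border basis $G_\oo$.

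Next I would argue surjectivity and injectivity. Surjectivity is essentially the definition recalled at the start of this section: every border basis of $I_M$ is, by construction, the $\esp(\oo)$-border basis for some max-compatible order ideal $\oo$. For injectivity I would recover $\oo$ from $G_\oo$, noting that $\esp(\oo)$ is precisely the distinguished $K$-basis of $\kx/I_M$ attached to the border basis, equivalently the set of terms not occurring as border terms of the elements of $G_\oo$. Thus $\oo$ is determined by $G_\oo$, and the assignment is injective.

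Finally, since Proposition~\ref{finiti-maxC} gives that there are only finitely many max-compatible order ideals associated with $M$, the bijection forces the number of border bases of $I_M$ to be finite. The only point deserving a little care is the injectivity claim, namely that the order ideal is genuinely part of the data recoverable from a border basis; but this follows immediately from the fact that $\esp(\oo)$ is the $K$-basis canonically carried by $G_\oo$, so no new estimates are required beyond what is already in place.
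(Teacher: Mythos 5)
Your proposal is correct and takes essentially the same route as the paper: the paper's proof consists precisely of the observation that every border basis of $I_M$ arises as $G_\oo$ in~(\ref{bbase}) for some max-compatible order ideal $\oo$, so finiteness follows at once from proposition~\ref{finiti-maxC}. One remark: your injectivity step is superfluous (a surjection from a finite set already gives finiteness), which is just as well, since $\esp(\oo)$ is not ``the set of terms not occurring as border terms of $G_\oo$'' but rather the complement of the monomial ideal \emph{generated} by the border terms.
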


Notice that the situation here parallels that of
Gr\"obner bases, where
the number of all possible reduced Gr\"obner bases of an ideal is finite
(see~\cite{mrob}) and can be read off from the construction of 
the Gr\"obner fan (see also~\cite{bfs, stur}).

Regarding the computation of (all) the border bases of a lattice ideal 
$I_M$, we have that if $\mathrm{rank}\, M=n$, then any max-compatible order
ideal is finite, hence~(\ref{bbase}) gives the border basis $G_\oo$ as a finite
set. If $\mathrm{rank}\, M < n$, then the border of a max-compatible order 
ideal $\oo$ is infinite. As a consequence of section~\ref{coi}, $\oo$ is
a finite union of hyper-rectangles contained in $\NN$, hence its border is 
contained in the borders of hyper-rectangles. 

Assume 
$R = \{(a_1, \dots, a_n) \ | \ l_i \leq a_i < L_i\}$ is one of the 
hyper-rectangles of the decomposition of $\oo$ (moreover, 
it is not restrictive to 
assume $l_i = 0$, since $\oo$ is an order ideal). For each $j$ such that
$L_j \not= +\infty$ we consider the elements:
\[
\{(a_1, \dots, L_j, \dots, a_n) \ | \ l_i \leq a_i < L_i, \ i \not= j\}.
\]
These hyper-rectangles give the border of $R$. In this way we eventually
describe the border of $\oo$ (as a finite union of hyper-rectangles). 
To obtain the representatives of the elements of $\partial \oo$ in $\oo$
(and hence to obtain the $\oo$-border basis), it is enough to compute
$\sigma(\rho(b))$ for each $b \in \partial \oo$ (or to use 
proposition~\ref{repr}). Although $\partial \oo$ is infinite, its 
description in terms of finite hyper-rectangles allows to describe
the $\oo$-border basis in finite terms.
Here we explain this construction with an example.

\begin{exmp}
\label{ex3}
Let us take again the module $M$ considered in example~\ref{ex2} and 
in particular the order ideal $\oo\subseteq \NN$ defined in there. 
The corresponding
lattice ideal $I_M$ is $(y-x^4z^{11}, x^6z^{15} - 1)$. The border of $\oo$
is the union of the following sets: $B_1 = \{(6+p, 0, 15) \mid p \geq 0 \}$, 
$B_2 = \{(6, 0, 16+p) \mid p \geq 0 \} $, 
$B_3 = \{(p, 1, 15+q) \mid 0 \leq p \leq 5, \ 
q \geq 0 \} $ and $B_4 = \{(p, 1, q) \mid p \geq 0, \ 0 \leq q \leq 14 \}$.
Finally, each element of $\partial\oo$ has a representative in $\oo$ 
according to the following scheme:
\medskip

\begin{center}
\begin{tabular}{|c|c||c|c|} \hline 
$b\in \partial \oo$ &\rule{0mm}{4mm} $\bar{b}\in \oo$ & $b\in \partial \oo$ & 
$\bar{b}\in \oo$\\ \hline
$(6+i, 0, 15)$ & $(i, 0, 0)$ & $(6, 0, 16+i)$ & $(0, 0, 1+i)$\\ 
$(j, 1, 15+i)$ & $(4+j, 0, 26+i)$ & $(2+h, 1, 15+i)$ & $(h, 0, 11+i)$ \\
$(i, 1, h)$ & $(4+i, 0, 11+h)$ & $(j, 1, 4+k)$ & $(4+j, 0, 15+j)$ \\
$(2+i, 1, 4+k)$ & $(i, 0, k)$ & & \\ \hline
\end{tabular}
\end{center}

\medskip
\noindent
where $i \geq 0$, $0 \leq j \leq 1$, $0 \leq h \leq 3$ and 
$0 \leq k \leq 10$.
\end{exmp}
Note, however, that if $I_M$ is a lattice ideal and if $\esp(\oo)$ is
an order ideal (assume $\oo$ max-compatible), to express any $f \in \kx$ 
as a linear combination of monomials in $\esp(\oo)$, it is not necessary
to have the $\esp(\oo)$-border basis and then use a reduction as is usually
done in the general case (see~\cite{kr}, proposition~6.4.11). As a 
consequence of proposition~\ref{repr}, any monomial of $f$ can directly be 
reduced to a monomial of $\esp(\oo)$, avoiding the construction of the 
border basis. 

It is well known that any Gr\"obner basis of an ideal $I$ gives an
order ideal in $\kx$ which is a $K$-basis of $\kx/I$ and, consequently, 
a border basis of $I$ (for instance, the order ideal considered in 
example~\ref{ex3} comes from the Gr\"obner basis of $I_M$ computed 
w.r.t.\ the lex term-order in which $x < z < y$). 
It is also well known that in general 
there exist border bases which do not come from any Gr\"obner bases, hence 
it is interesting to see what can be said regarding the border bases and 
the Gr\"obner bases of a lattice ideal $I_M$. Although in many examples
border bases are indeed Gr\"obner bases (see next section), 
there are several cases of border bases of lattice ideals 
which cannot come from Gr\"obner bases. Here we give an example.

Let us consider the module $M\subseteq \mathbb{Z}^3$ generated by 
the vectors $(1, -2, -1)$, $(1, -1, 2)$ and $(-2, -1, 1)$. The corresponding 
HNF for $M$ is:
\[
M_H = \left(
\begin{array}{rrr}
1 & 0 & 5\\
0 & 1 & 3 \\
0 & 0 & 14
\end{array}
\right). 
\]
The set $\V$ is given by $\mathbb{N}^3 \setminus \cup_{a\in \mathcal{A}_1}C(a)$, 
where $\mathcal{A}_1 = \{(0, 0, 14), (1, 0, 5), (0, 14, 0)$,   
$(0, 4, 2)$, $(14, 0, 0)$, $(2, 1, 1)$, $(0, 5, 1)$, $(1, 2, 1)$, 
$(0, 1, 3)$, $(1, 3, 0)$, $(4, 2, 0)$, $(3, 0, 1)$, $(1, 1, 2)$, 
$(2, 0, 4)$, $(5, 1, 0) \}$, hence can be described by:
\[
\V = \bigcup_{i=1}^{10} D(P_i)
\]
where $P_1=(13, 0, 0)$, $P_2=(0, 13, 0)$, $P_3=(0, 0, 13)$, 
$P_4=(4, 1, 0)$, $P_5=(3, 2, 0)$, $P_6=(2, 0, 3)$, $P_7=(1, 0, 4)$, 
$P_8=(0, 3, 2)$, $P_9=(0, 4, 1)$, $P_{10}=(1, 1, 1)$.

The binomial ideal $I_M$ associated with $M$ is: 
\[
I_M = \left( xz^2 - y, y^2z - x, y^3 - x^2z, xy^2 - z^3, x^2y - z, 
x^3 - yz^2, z^4 - x^2, yz^3 - 1 \right).
\]
The computation of the Gr\"obner fan of $I_M$ (one possibility is to 
use the implementation presented in Sage, see~\cite{sage}) shows 
that $I_M$ has
33 reduced Gr\"obner bases, while the computation of all the possible 
max-compatible order ideals of $I_M$ obtained with the techniques 
developed in section~\ref{coi} gives 35 order ideals. Hence there must be 
two max-compatible order ideals (in $\mathbb{N}^3$) which do not come from  
Gr\"obner bases. They are the following: 
\[
\oo_1 = D(1, 2, 0) \cup D(2, 0, 1) \cup D(0, 1, 2) \cup D(1, 1, 1)
\]
and
\[
\oo_2 = D(3, 0, 0) \cup D(0, 3, 0) \cup D(0, 0, 3) \cup D(1, 1, 1).
\]
They both have 14 elements (according either to the determinant of
the matrix $M_H$ or to the 
dimension of $K[x, y, z]/I_M$ as a $K$ vector space). 
The border basis corresponding to $\esp(\oo_1)$ contains the following three 
binomials:
\[
x^3-yz^2, \quad y^3-x^2z, \quad z^3-xy^2
\]
(where $x^3, y^3$ and $z^3$ are in the border of $\esp(\oo_1)$, while 
$yz^2, x^2z$ and $xy^2$ are in $\esp(\oo_1)$). If $\esp(\oo_1)$ were 
an order ideal coming from a Gr\"obner basis corresponding to a term order
$<_\sigma$, then we would have: $x^3>_\sigma yz^2,\ y^3>_\sigma x^2z,\ 
z^3 >_\sigma xy^2$ and these conditions are not compatible. \\
A similar contradiction can be found with the order ideal $\oo_2$. \\
It is possible to verify that the matrix $M_H$ of this example 
is minimal, in the sense that any other matrix of the form
\[ 
\left(
\begin{array}{rrr}
1 & 0 & a\\
0 & 1 & b \\
0 & 0 & c
\end{array}
\right)
\] 
where $a \leq 5$, $b \leq 3$ and $c \leq 14$, $(a, b, c) \not = (5, 3, 14)$,
corresponds to an ideal in which 
all border bases come from Gr\"obner bases.  

\section{The case $m = n = 2$}
\label{n=2}
The case of a two dimensional module $M$ in $\mathbb{Z}^2$ is 
particularly simple. In this section we show the main points.
We consider the following two matrices obtained from the generators of $M$:
\begin{eqnarray}
\left(
\begin{array}{cc}
a_1 & a_2 \\
0 & a_3
\end{array}
\right), \qquad 
\left(
\begin{array}{cc}
b_1 & b_2 \\
b_3 & 0
\end{array}
\right).
\label{matr2x2}
\end{eqnarray}
The first is the usual matrix in HNF (hence $a_1 > 0$, 
$0\leq a_2 < a_3$), the second is in HNF with respect to 
the second and first column, hence we have $b_2 > 0$, $0 \leq b_1 <b_3$;
moreover, the relations between the $a$'s and $b$'s are:
$b_2 = \mathrm{gcd}(a_2, a_3)$, $b_3 = a_1a_3/b_2$ and 
$b_1 = a_1\cdot \min \{\lambda \in \mathbb{N} \ | \ \exists \mu \in \mathbb{Z}: 
b_2 = \lambda a_2 + \mu a_3 \}$.

Let $\mathcal{B}_1$ be the set of minimal elements of $M \cap \mathbb{N}^2
\setminus \{0\}$ w.r.t.\ $\preceq$ and $\mathcal{B}_2$ be the set of 
minimal elements of $\{(p, q) \in \mathbb{N}^2 \ | \ (p, -q) \in M 
\setminus \{0\}\}$ again w.r.t.\ $\preceq$. Both $\mathcal{B}_1$ and 
$\mathcal{B}_2$ contain the elements $(b_3, 0)$ and $(0, a_3)$. 
The set $\mathcal{A}_1$ of proposition~\ref{insiemeV} is contained in 
$\mathcal{B}_1  \cup \mathcal{B}_2$ hence (by 
proposition~\ref{insiemeV}), we have:
\[
\V = \mathbb{N}^2 \setminus \left(\bigcup_{P_1 \in \mathcal{B}_1} C(P_1) \cup
\bigcup_{P_2 \in \mathcal{B}_2} C(P_2)\right).
\]
We call $P, Q \in \mathcal{B}_2$ 
\emph{consecutive} if 
\[
\mathcal{B}_2 \cap D(\mathrm{lcm}(P, Q)) = \{P, \ Q\}.
\label{consecutivi}
\]
\begin{prop} Let $P =(p_1, p_2)$ and $Q=(q_1, q_2)$ be two consecutive 
elements of $\mathcal{B}_2$, let $R\in M$ be such that 
$R = (p_1 -q_1, q_2 - p_2)$ (if $p_1 \geq q_1$
and $q_2 \geq p_2 $) or $R = (q_1-p_1, p_2-q_2)$ (if $q_1 \geq p_1$
and $p_2 \geq q_2 $) 
and let $\oo(P, Q) = D(\mathrm{lcm}(P, Q)) \setminus C(R)$. Then
we have:
\begin{enumerate}
\item $\oo(P, Q)$ is a compatible order ideal;
\item $\oo(P, Q)$ has $a_1 a_3$ elements (hence is max-compatible); 
\item If $\oo$ is any compatible order ideal, then there exist $P, Q \in 
\mathcal{B}_2$ consecutive, such that $\oo \subseteq \oo(P, Q)$. 
\end{enumerate}
\label{propOO}
\end{prop}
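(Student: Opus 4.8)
The plan is to read the three assertions geometrically and to show that $\oo(P,Q)$ is a \emph{fundamental domain} for $M$. First I would fix notation by listing $\mathcal{B}_2$ as a descending staircase $E_0=(0,a_3),E_1,\dots,E_k=(b_3,0)$ with strictly increasing first coordinate and strictly decreasing second coordinate; the characterisation $\mathcal{B}_2\cap D(\mathrm{lcm}(P,Q))=\{P,Q\}$ shows at once that the consecutive pairs in the statement are exactly the adjacent pairs $E_j,E_{j+1}$. To each $E_i=(x_i,y_i)$ I attach the \emph{south-east} vector $w_i=(x_i,-y_i)\in M$; then for $P=E_j$, $Q=E_{j+1}$ one has $\mathrm{lcm}(P,Q)=(x_{j+1},y_j)$ and $R=w_{j+1}-w_j=(x_{j+1}-x_j,\;y_j-y_{j+1})\in M\cap\mathbb{N}^2$, so $\oo(P,Q)$ is the rectangle $D(\mathrm{lcm}(P,Q))$ with its upper-right corner cone $C(R)$ deleted, i.e.\ an L-shaped region. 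The order-ideal half of item~(1) is then immediate, since $D(\mathrm{lcm}(P,Q))$ is a down-set and $C(R)$ an up-set, and a down-set minus an up-set is a down-set.

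For compatibility I would control which nonzero elements of $M$ can occur as a difference $u-u'$ of two points of $\oo(P,Q)$; such a difference lies in the open box $(-x_{j+1},x_{j+1})\times(-y_j,y_j)$. The key step is the \emph{no interior south-east vector} lemma: there is no $(s_1,-s_2)\in M$ with $0<s_1<x_{j+1}$ and $0<s_2<y_j$, for then $(s_1,s_2)$ would dominate some $E_i\in\mathcal{B}_2$ with $x_i\le s_1<x_{j+1}$ and $y_i\le s_2<y_j$, forcing $i\le j$ and $i\ge j+1$ simultaneously; by negation the same excludes north-west vectors. Next, for a first-quadrant difference $c=u-u'\in M\cap\mathbb{N}^2$ inside the box I would show $c\succeq R$: if $c\not\succeq R$ then either $c_1<x_{j+1}-x_j$, in which case $c+w_j$ is a south-east vector inside the box, or $c_2<y_j-y_{j+1}$, in which case $w_{j+1}-c$ is one, each contradicting the lemma. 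Since $c\succeq R$ gives $u=u'+c\in C(R)$, excluded from $\oo(P,Q)$, no two distinct points are $M$-equivalent (mixed-quadrant differences are already barred by the lemma, and pure-axis ones by $y_j\le a_3$ and $x_{j+1}\le b_3$). This settles item~(1).

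For item~(2) I count unconditionally: $\oo(P,Q)$ is a rectangle minus a sub-rectangle, so $|\oo(P,Q)|=x_{j+1}y_j-x_jy_{j+1}=|\det(w_j,w_{j+1})|=:D$. Now two inequalities pin this down. On one hand $\langle w_j,w_{j+1}\rangle\subseteq M$ gives $D=[\mathbb{Z}^2:\langle w_j,w_{j+1}\rangle]=[\mathbb{Z}^2:M]\cdot[M:\langle w_j,w_{j+1}\rangle]$, so $a_1a_3=[\mathbb{Z}^2:M]$ divides $D$ and $D\ge a_1a_3$. On the other hand compatibility (item~(1)) embeds $\oo(P,Q)$ into $\mathbb{Z}^2/M$, whence $D=|\oo(P,Q)|\le[\mathbb{Z}^2:M]=a_1a_3$. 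Hence $D=a_1a_3$, which simultaneously yields $|\oo(P,Q)|=a_1a_3$, the fact that $\oo(P,Q)$ is a complete system of residues (so it is max-compatible), and, as a byproduct, that consecutive elements of $\mathcal{B}_2$ form a $\mathbb{Z}$-basis of $M$. Thus items~(1) and~(2) rest entirely on the interior-vector lemma and need no separate unimodularity argument.

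For item~(3) I would pass to the height-function description: a finite compatible order ideal is $\oo=\{(u_1,u_2):0\le u_2<h(u_1)\}$ for a non-increasing $h$, and compatibility against the south-east directions becomes the family of implications ``$h(u_1)>y_i\Rightarrow h(u_1+x_i)=0$'' for all $E_i=(x_i,y_i)$ and all $u_1$. I then want to show every such $h$ is dominated pointwise by the height function $h_j$ of exactly one step $\oo(E_j,E_{j+1})$ (namely $h_j\equiv y_j$ on $[0,x_{j+1}-x_j)$, $\equiv y_j-y_{j+1}$ on $[x_{j+1}-x_j,x_{j+1})$, and $0$ beyond): one selects $j$ from the top height $h(0)$ via $y_{j+1}<h(0)\le y_j$ and uses the implications to bound the width of each level of $h$, giving $h\le h_j$, i.e.\ $\oo\subseteq\oo(E_j,E_{j+1})$; avoidance of $C(R)$ drops out of the same implications, since a point of $\oo$ in $C(R)$ together with a sufficiently tall column of $\oo$ would realise a forbidden $E_i$-difference. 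I expect this exhaustiveness to be the main obstacle, because unlike items~(1)--(2) it is a universally quantified statement over all compatible order ideals; the rest is a finite combinatorial matching against the staircase once the height/staircase dictionary is set up.
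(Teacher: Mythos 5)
Your items (1) and (2) are essentially right, with one caveat that you share with the paper: everything must be read with the half-open box $\{(v_1,v_2)\mid 0\le v_1<x_{j+1},\ 0\le v_2<y_j\}$ in place of $D(\mathrm{lcm}(P,Q))$ as literally defined (the paper's $D$ is closed). Under the literal closed reading, item (1) is false as stated: for $M=\langle(2,6),(0,10)\rangle$ and the consecutive pair $P=(2,4)$, $Q=(6,2)$, the set $D((6,4))\setminus C((4,2))$ contains the equivalent points $(2,0)$ and $(0,4)$. Both your ``differences lie in the open box'' claim and both point counts (yours and the paper's, namely $|p_2q_1-p_1q_2|$) presuppose the half-open box. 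Granting that reading, your item (1) is a fleshed-out version of the paper's terse argument: your ``no interior south-east vector'' lemma is exactly what the paper invokes when it says the reflected difference would give an element of $\mathcal{B}_2$ inside $D(\mathrm{lcm}(P,Q))$, contradicting consecutivity. Your item (2), however, is genuinely different and arguably cleaner: the paper first shows that $w_j,w_{j+1}$ generate $M$ (no point of $M$ in the fundamental parallelogram) and then counts, whereas you pin $D=|\det(w_j,w_{j+1})|$ between $a_1a_3\mid D$ (index multiplicativity for $\langle w_j,w_{j+1}\rangle\subseteq M\subseteq\mathbb{Z}^2$) and $D=|\oo(P,Q)|\le a_1a_3$ (the injection $\oo(P,Q)\hookrightarrow\mathbb{Z}^2/M$ furnished by item (1)), getting max-compatibility and the basis property of $\{w_j,w_{j+1}\}$ in one stroke, with no lattice-point geometry.

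Item (3) has a genuine gap: you use only the south-east half of compatibility (the implications $h(u_1)>y_i\Rightarrow h(u_1+x_i)=0$), and these cannot imply the conclusion. Take $M=\langle(1,1),(0,5)\rangle$, so that $\mathcal{B}_2=\{(i,5-i)\mid 0\le i\le 5\}$ and every consecutive pair has $R=(1,1)$. The order ideal $\oo=D((1,1))=\{(0,0),(1,0),(0,1),(1,1)\}$ satisfies every one of your implications (its height function is bounded by $2$ and vanishes from column $2$ on), yet it is contained in no $\oo(P,Q)$, because each $\oo(P,Q)$ excludes $C((1,1))\ni(1,1)$. Of course this $\oo$ is not compatible, but the violation is the north-east vector $(1,1)\in M$ (so $(1,1)\equiv_M(0,0)$), which your implications never see; in particular your proposed mechanism for the $C(R)$-avoidance (``a point of $\oo$ in $C(R)$ plus a sufficiently tall column realises a forbidden $E_i$-difference'') fails here, since no tall column exists. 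The missing ingredient is much simpler and is exactly what makes the paper's ``clearly $\oo\subseteq\oo(P,Q)$'' correct: $R=w_{j+1}-w_j$ is a nonzero element of $M\cap\mathbb{N}^2$, and $\oo$ is a down-set containing $0$, so if some $u\in\oo$ lay in $C(R)$ then $R\in D(u)\subseteq\oo$ and $0,R$ would be two distinct equivalent elements of $\oo$; hence compatibility kills $\oo\cap C(R)$ outright. With that added, your selection of $j$ (via $y_{j+1}<h(0)\le y_j$) does finish the proof: $u_2<h(u_1)\le h(0)\le y_j$ for every $u\in\oo$ because $h$ is non-increasing, and $u_1<x_{j+1}$ because otherwise $(x_{j+1},0)$ and $(0,y_{j+1})$ would both lie in $\oo$ and differ by the south-east vector $w_{j+1}\in M$. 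So the skeleton of your height-function plan survives, but it must be fed the north-east part of compatibility, not only the south-east part.
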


\begin{figure}
\begin{center}
\resizebox{4cm}{!}{
\includegraphics{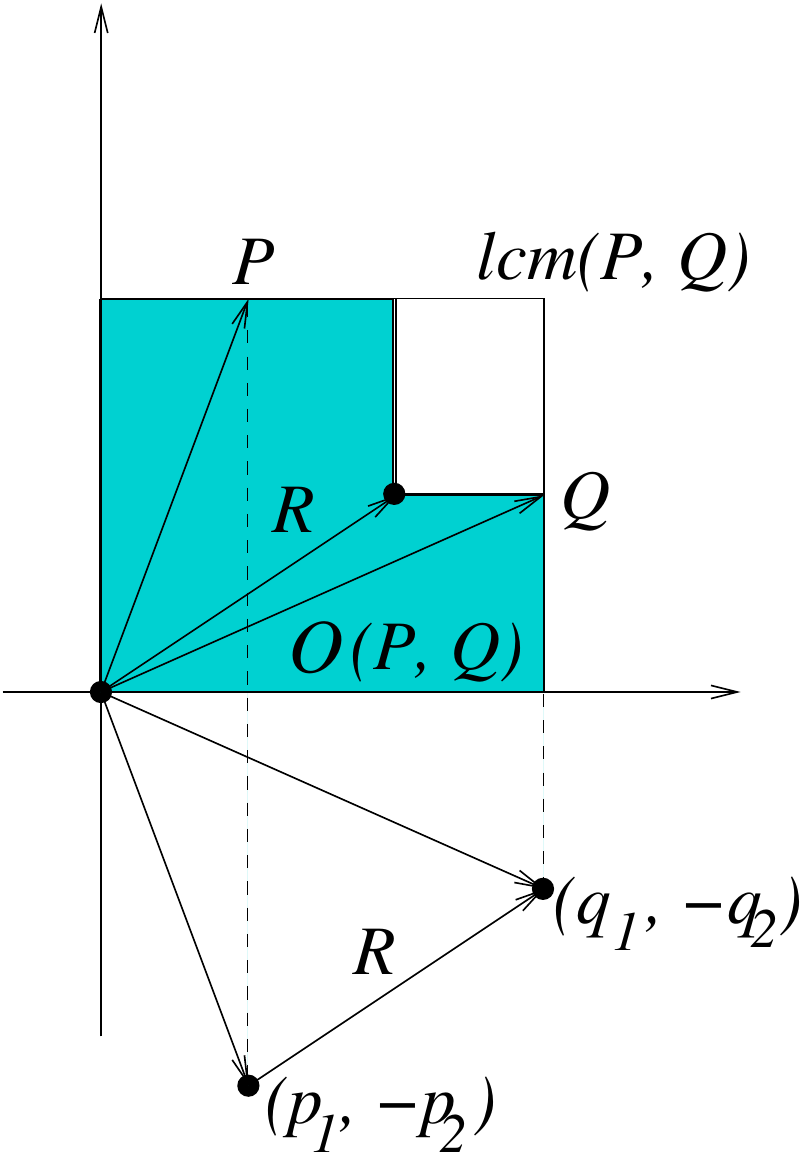}
}
\end{center}
\caption{The construction of the order ideal $\oo(P, Q)$ of 
proposition~\ref{propOO}.}
\label{figCaso2}
\end{figure}

\begin{proof} Clearly $\oo(P, Q)$ is an order ideal. If $A, B \in \oo(P, Q)$
are equivalent, we can assume that $A = (\alpha, 0)$ and $B = (0, \beta)$. 
The vector $(\alpha, -\beta)$ is an element of $M$ and it is easy to see 
that $(\alpha, \beta)$ is minimal w.r.t.\ $\preceq$, so $(\alpha, \beta) \in 
\mathcal{B}_2$, in contradiction with the consecutivity of $P$ and $Q$. 
Again, since $P$ and $Q$ are consecutive, in the parallelogram whose 
vertexes are $O$, $(p_1, -p_2)$, $(q_1, -q_2)$ and $(p_1, -p_2)+(q_1, -q_2)$
there are no other points of $M$, hence $M$ is generated by 
$(p_1, -p_2)$, $(q_1, -q_2)$ and therefore the determinant of the matrix:
\[
\left(
\begin{array}{cc}
p_1 & -p_2 \\
q_1 & -q_2
\end{array}
\right)
\]
(which is $p_2q_1 - p_1 q_2$) must be equal (in absolute value) to $a_1a_3$. 
A direct computation of the number of elements with integer coordinates 
contained in $\oo(P, Q)$ gives 
$|p_2q_1 - p_1 q_2|$, hence $\oo(P, Q)$ has $a_1a_3$ elements. In particular
$\oo(P, Q)$ is maximum. Finally, take any compatible order ideal $\oo$. Let
$P = (p_1, p_2), Q = (q_1, q_2) \in \mathcal{B}_2$ be consecutive, such that 
$(p_1, 0) \in \oo$ but $(q_1, 0) \not \in \oo$ ($P$ and $Q$ can always 
be found, since $\mathcal{B}_2$ contains 
$(b_3, 0)$ and $(0, a_3)$). 
Then clearly $\oo \subseteq \oo(P, Q)$. 
\end{proof}

As a consequence of the above proposition, we see that all the 
possible maximum compatible order ideals of $M$ are of the form
$\oo(P, Q)$, where $P, Q \in \mathcal{B}_2$ are consecutive. 
It is easy to verify that $(a_1, a_3-a_2)$ and $(0, a_3)$ are 
elements of $\mathcal{B}_2$ and are consecutive, analogously 
$(b_3-b_1, b_2)$ and $(b_3, 0)$ are also in $\mathcal{B}_2$ and are
consecutive. From them we get the following two maximum, compatible 
order ideals: $\{(\alpha, \beta) \ | \
0 \leq \alpha < a_1, \ 0 \leq \beta < a_3  \}$ and 
$\{(\alpha, \beta) \ | \ 0 \leq \alpha 
<  b_3, \ 0 \leq \beta < b_2\}$ (which are two rectangles).
If now $P, Q \in \mathcal{B}_2$ are any
consecutive elements (different from the two couples considered above), 
neither $P$ nor $Q$ lies on one of the two coordinate axes, hence the 
order ideal $\oo(P, Q)$ is given by the difference of two rectangles, as 
in figure~\ref{figCaso2}. In conclusion we have:

\begin{prop}
Let $M \subseteq \mathbb{Z}^2$ be a 
two dimensional sub-module generated by the rows of the first and hence 
also the second matrix in~(\ref{matr2x2}). 
Then the maximal compatible order ideals 
of $M$ are: 
\begin{enumerate}
\item $\{(\alpha, \beta) \ | \
0 \leq \alpha < a_1, \ 0 \leq \beta < a_3  \}$;
\label{uno1}
\item $\{(\alpha, \beta) \ | \ 0 \leq \alpha 
<  b_3, \ 0 \leq \beta < b_2\}$ (where $b_2 = \mathrm{gcd}(a_2, a_3)$ 
and $b_3 = a_1a_3/b_2$);
\label{due2}
\item a difference of two rectangles: $D(\mathrm{lcm}(P,Q)) \setminus C(R)$,
where $P, Q \in \mathcal{B}_2$ are consecutive, with no zero coordinates
and $R$ is as defined in proposition~\ref{propOO}.
\label{tre3}
\end{enumerate}
\end{prop}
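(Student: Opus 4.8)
The plan is to read this Proposition as the bookkeeping summary of everything assembled since Proposition~\ref{propOO}, and to organize the argument around a single clean reduction followed by a case split. The crucial input is Proposition~\ref{propOO}: part~(3) says every compatible order ideal lies inside some $\oo(P,Q)$ with $P,Q\in\mathcal{B}_2$ consecutive, while parts~(1)--(2) say that each such $\oo(P,Q)$ is itself a compatible order ideal with $a_1a_3$ elements, hence maximum and so max-compatible. First I would combine these into the statement that the maximal compatible order ideals are \emph{exactly} the sets $\oo(P,Q)$: if $\oo$ is maximal compatible, then $\oo\subseteq\oo(P,Q)$ for suitable consecutive $P,Q$, and since $\oo(P,Q)$ is compatible, maximality forces $\oo=\oo(P,Q)$; conversely every $\oo(P,Q)$ is maximum, hence maximal. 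A byproduct worth recording is that here, in contrast with the general rank-$n$ situation illustrated earlier, no maximal compatible order ideal fails to be max-compatible.

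It then remains to convert ``all $\oo(P,Q)$'' into the explicit trichotomy, and for this the controlling observation is the shape of $\mathcal{B}_2$. It is a finite antichain joining the two axis points $(0,a_3)$ and $(b_3,0)$ (both of which it contains), so I would list its elements as a descending staircase $(0,a_3),\dots,(b_3,0)$. A point $(p,0)\in\mathcal{B}_2$ forces $(p,0)\in M$ and minimality forces $p=b_3$, and symmetrically on the other axis; hence exactly the first and last staircase entries lie on a coordinate axis. Consequently at most two consecutive pairs meet an axis, namely the pair adjacent to $(0,a_3)$ and the pair adjacent to $(b_3,0)$, and these I would treat separately from the interior pairs.

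For the pair adjacent to $(0,a_3)$ I would confirm that its partner is $(a_1,a_3-a_2)$: the vector $(a_1,a_2-a_3)=(a_1,a_2)-(0,a_3)$ lies in $M$, so $(a_1,a_3-a_2)\in\mathcal{B}_2$, and a short check gives consecutivity. Computing this $\oo(P,Q)$ one gets $\mathrm{lcm}=(a_1,a_3)$ and $R=(a_1,a_2)$, and the removed cone $C(R)$ touches $D(\mathrm{lcm}(P,Q))$ only along the edge $\alpha=a_1$, so the set collapses to the rectangle $\{(\alpha,\beta)\mid 0\le\alpha<a_1,\ 0\le\beta<a_3\}$ of case~(\ref{uno1}). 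The symmetric computation for the pair adjacent to $(b_3,0)$, with partner $(b_3-b_1,b_2)$, yields the rectangle of case~(\ref{due2}), invoking the relations $b_2=\mathrm{gcd}(a_2,a_3)$ and $b_3=a_1a_3/b_2$ recorded after~(\ref{matr2x2}). For every remaining, interior pair $P,Q$ neither point lies on an axis, $R$ has both coordinates strictly positive, and $C(R)$ genuinely bites a corner out of $D(\mathrm{lcm}(P,Q))$, leaving the honest difference of two rectangles of case~(\ref{tre3}). Assembling the three possibilities closes the argument.

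The step I expect to be the most delicate is the boundary computation showing that the two axis-adjacent pairs degenerate to full rectangles rather than to proper rectangle-differences. One must verify not only that the claimed partners $(a_1,a_3-a_2)$ and $(b_3-b_1,b_2)$ \emph{belong} to $\mathcal{B}_2$ but that they are the genuine consecutive neighbours of the axis points, and that in each case the cone $C(R)$ meets $D(\mathrm{lcm}(P,Q))$ only in the already-discarded upper edge, so that the would-be missing corner is empty. Everything else is routine bookkeeping layered on top of Proposition~\ref{propOO} and the staircase structure of $\mathcal{B}_2$.
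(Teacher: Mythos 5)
Your proposal is correct and follows essentially the same route as the paper: the paper's own argument (the discussion following Proposition~\ref{propOO}) likewise reduces everything to the sets $\oo(P,Q)$ via part~(3) of that proposition, identifies the two axis-adjacent consecutive pairs $\bigl((0,a_3),(a_1,a_3-a_2)\bigr)$ and $\bigl((b_3,0),(b_3-b_1,b_2)\bigr)$ as yielding the two rectangles, and observes that all remaining consecutive pairs have no coordinate on an axis and hence give differences of two rectangles. The only difference is that you spell out what the paper dismisses as ``easy to verify'' --- the staircase structure of $\mathcal{B}_2$, the uniqueness of each axis point's consecutive partner, and the degeneration of $C(R)$ on the boundary --- which is a faithful elaboration rather than a different proof.
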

If $\oo$ is one of the order ideals described by the above proposition, 
then the \emph{corners} of $\esp(\oo)$ (as defined in~\cite{kr}, page~428) 
are either two elements (in case~(\ref{uno1}) and~(\ref{due2})) or three
(in case~(\ref{tre3})), as shown by the points $A$ and $B$ and $A$, $B$ and $C$
in figure~\ref{figAngoli}.
\begin{figure}
\begin{center}
\resizebox{!}{3cm}{
\includegraphics{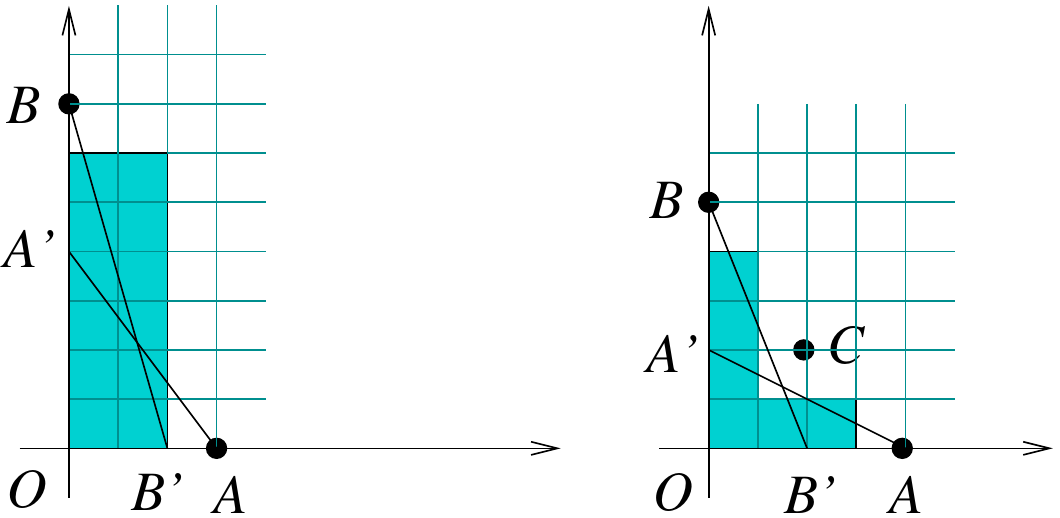}
}
\end{center}
\caption{The possible shapes of an order ideal for two dimensional lattices.}
\label{figAngoli}
\end{figure}
If $A$, $B$ are corners, let $A'$ and $B'$ denote their representatives 
in the order ideal (the representative of $C$ 
is necessarily $O$, since $C$ is an element of $M$). It is clear that
$A'$ and $B'$ have one coordinate 0 (if not, we could construct two equivalent
elements in the order ideal) and (recalling the characterizations of 
term-orders given in~\cite{rob}), any line through $O$ which has a 
slope between the slope of the line $BB'$ and the slope of $AA'$ gives 
rise to a term order $<_\sigma$ in $\mathbb{N}^2$ (and hence in $K[x, y]$) 
in which $A' <_\sigma A$, $B' <_\sigma B$ (and $0 <_\sigma C$). 
As a consequence of the 
above considerations and of~\cite{kr}, proposition~6.4.18, we have:

\begin{prop}
Let $M \subseteq \mathbb{Z}^2$ be as above and let $I_M$ be the corresponding
lattice ideal. Any maximal compatible order ideal w.r.t.\ $M$ corresponds
to the lattice ideal constructed from a Gr\"obner bases of $I_M$ (and 
conversely). $I_M$ has two reduced Gr\"obner bases of two elements
which are $\{x^{a_1}-1, \ x^{a_3}-1\}$ and $\{x^{b_3}-1, \ x^{b_2}-1\}$ and 
all the other reduced Gr\"obner bases have three elements 
of the form $x^\alpha-y^{\alpha'}$, $y^\beta-y^{\beta'}$, $x^{\gamma_1}y^{\gamma_2}-1$, 
where $A = (\alpha, 0)$, $A' = (0, \alpha')$, $B = (0, \beta)$, 
$B' = (\beta', 0)$, $C = (\gamma_1, \gamma_2)$, $A, B, C$ are 
corners of an order ideal, $A', B'$ are the representative of $A$ and $B$ 
in the order ideal.
\end{prop}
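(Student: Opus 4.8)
The plan is to establish the claimed bijection between maximal compatible order ideals and reduced Gr\"obner bases, and then to read off the number and shape of the bases from the classification already obtained. Since here $m=n=2$, the rank of $M$ equals $n$, so by Proposition~\ref{tappi} the set $\V$ is finite and every compatible order ideal is finite; hence for this $M$ the notions of maximal compatible, maximum compatible and max-compatible coincide, and by Proposition~\ref{propOO} each such $\oo$ has exactly $a_1a_3$ elements. By the preceding proposition every maximal compatible order ideal is either one of the two rectangles of items~\ref{uno1} and~\ref{due2}, or a difference of two rectangles as in item~\ref{tre3}. In the first two cases $\esp(\oo)$ has exactly the two corners $A,B$, both on the coordinate axes; in the third it has the three corners $A,B,C$, with $C\in M$. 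For each I would record the representatives $A',B'$ (and $0$ for $C$) and note that $A-A'$, $B-B'$ and $C$ all lie in $M$, so that $\esp(A)-\esp(A')$, $\esp(B)-\esp(B')$ and $\esp(C)-1$ are binomials of $I_M$.

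For the forward direction I would exhibit, for each such $\oo$, a term order realizing it, using the observation made just before the statement: each of $A',B'$ has a zero coordinate, so $A-A'=(\alpha,-\alpha')$ and $B-B'=(-\beta',\beta)$ are lattice vectors in the (open) fourth and second quadrants. A weight vector $w$ in the open first quadrant defines, via the characterization of term orders in~\cite{rob}, a term order $<_\sigma$; the required inequalities $A'<_\sigma A$, $B'<_\sigma B$ and $0<_\sigma C$ translate into $w\cdot(A-A')>0$, $w\cdot(B-B')>0$ and $w\cdot C>0$, i.e. $\beta'/\beta < w_2/w_1 < \alpha/\alpha'$ (the condition on $C$ being automatic for $w$ with positive entries). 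For such a $<_\sigma$ the leading terms of the three binomials above are exactly the corners of $\esp(\oo)$, so by~\cite{kr}, proposition~6.4.18, the order ideal attached to the reduced Gr\"obner basis of $I_M$ for $<_\sigma$ is precisely $\esp(\oo)$. Conversely, any reduced Gr\"obner basis $G$ of $I_M$ yields its order ideal of standard monomials, which is a $K$-basis of $\kx/I_M$ and hence a compatible order ideal of the maximum cardinality $a_1a_3$, i.e. a max-compatible one. This would give the asserted correspondence in both directions.

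It then remains to read off the count and the shapes. Counting corners, the two rectangular order ideals of items~\ref{uno1} and~\ref{due2} produce the two reduced Gr\"obner bases with two elements, while every order ideal of the third type has three corners and hence three elements. The explicit binomials are obtained by reducing each corner with the map $\rho$ of Section~\ref{sez2}: an axis corner $A=(\alpha,0)$ reduces to a pure power $\esp(A')=y^{\alpha'}$ of the other variable and $B=(0,\beta)$ to $x^{\beta'}$, giving $x^{\alpha}-y^{\alpha'}$ and $y^{\beta}-x^{\beta'}$; the interior corner $C=(\gamma_1,\gamma_2)\in M$ reduces to $0$, giving $x^{\gamma_1}y^{\gamma_2}-1$. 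Specialising this to the two rectangles yields the two two-element bases displayed in the statement.

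The step I expect to be the real obstacle is the non-emptiness of the slope interval $(\beta'/\beta,\ \alpha/\alpha')$ in the three-corner case, equivalently the strict inequality $\alpha'\beta'<\alpha\beta$; this is exactly what guarantees that a separating weight $w$, and hence a compatible term order, exists. I would settle it with the area argument already used in the proof of Proposition~\ref{propOO}: consecutivity of $P,Q\in\mathcal{B}_2$ forces the parallelogram they span to contain no further point of $M$, which pins the three corners and their representatives into the configuration drawn in Figure~\ref{figAngoli} and yields the required inequality. Once this is in place, the two directions above combine to give the stated bijection, and the enumeration of the reduced Gr\"obner bases follows.
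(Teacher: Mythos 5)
Your proposal is correct and takes essentially the same route as the paper's own (implicit) proof, which is the discussion preceding the proposition: classify the order ideals via the previous proposition, observe that the representatives $A'$, $B'$ of the corners lie on the axes, produce a term order from a weight vector whose level lines separate $AA'$ from $BB'$ (citing~\cite{rob}), conclude with~\cite{kr}, proposition~6.4.18, and get the converse from the standard monomials of any reduced Gr\"obner basis. The one step you flag as the real obstacle, $\alpha'\beta'<\alpha\beta$, needs no area argument: since $A'=(0,\alpha')$ and $B'=(\beta',0)$ lie in $\oo$ while $B=(0,\beta)$ and $A=(\alpha,0)$ do not, the order-ideal property gives $\alpha'<\beta$ and $\beta'<\alpha$ at once.
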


\bibliographystyle{plain}
\bibliography{boffi_logar_bibfile}

\end{document}